\newcommand{\C}[1]{{\protect\mathcal{#1}}}
\newcommand{\I}[1]{{\mathbbm #1}}
\renewcommand{\O}[1]{\overline{#1}}
\newcommand{\V}[1]{\mathbold{#1}}
\newcommand{\e}{\varepsilon}
\newcommand{\floor}[1]{\lfloor #1\rfloor}
\renewcommand{\mid}{:}
\renewcommand{\ldots}{\hspace{0.9pt}.\hspace{0.3pt}.\hspace{0.3pt}.\hspace{1.5pt}}
\renewcommand{\ge}{\geqslant}
\renewcommand{\le}{\leqslant}
\renewcommand{\geq}{\geqslant}
\renewcommand{\leq}{\leqslant}
\renewcommand{\succeq}{\succcurlyeq}
\newif\ifnotesw\noteswtrue
\newcommand{\comment}[1]{\ifnotesw \textcolor{blue}{ $\blacktriangleright$\ {\sf #1}\ 
  $\blacktriangleleft$ }\fi}
\newcommand{\hide}[1]{}
\newcommand{\beq}[1]{\begin{equation}\label{#1}}
\newcommand{\eeq}{\end{equation}}
\newtheorem{theorem}{Theorem}
\newtheorem{lemma}[theorem]{Lemma}
\newtheorem{corollary}[theorem]{Corollary}
\newtheorem*{claim*}{Claim}
\newcommand{\vc}[1]{\ensuremath{\vcenter{\hbox{#1}}}}
\tikzset{unlabeled_vertex/.style={inner sep=1.9pt, outer sep=0pt, circle, fill}} 
\tikzset{labeled_vertex/.style={inner sep=2.2pt, outer sep=0pt, rectangle, fill=white,draw}} 
\tikzset{edge_color1/.style={color=red,  line width=1.2pt,opacity=0}} 
\tikzset{edge_color2/.style={color=black, line width=0.8pt,opacity=1}} 
\def\outercycle#1#2{ \draw \foreach \x in {0,1,...,#1}{(270-45+\x*360/#2:0.7) coordinate(x\x)};}
\begin{document}

\title{Sharp bounds for decomposing graphs into edges and triangles}

\author{
Adam Blumenthal\thanks{Department of Mathematics, Iowa State University, Ames, IA, E-mail: {\tt ablument@iastate.edu}.}
\and
Bernard Lidick\'{y}\thanks{Department of Mathematics, Iowa State University, Ames, IA, E-mail: {\tt lidicky@iastate.edu}.
Research supported in part by NSF grants DMS-1600390 and DMS-1855653.
}
\and
Yanitsa Pehova\thanks{ Mathematics Institute, University of Warwick, Coventry CV4 7AL, UK, E-mail: {\tt Y.Pehova@warwick.ac.uk}. Supported by the European Research Council (ERC) under the European Union’s Horizon 2020 research and innovation programme (grant agreement No 648509). This publication reflects only its authors' view; the European Research Council Executive Agency is not responsible for any use that may be made of the information it contains.}
\and
Florian Pfender\thanks{Department of Mathematical and Statistical Sciences, University of Colorado Denver, E-mail: {\tt 
Florian.Pfender@ucdenver.edu}. 
Research supported in part by NSF grants  DMS-1600483 and DMS-1855622.
} 
\and
Oleg Pikhurko\thanks{
Mathematics Institute and DIMAP, University of Warwick, Coventry CV4 7AL, UK, E-mail: {\tt O.Pikhurko@warwick.ac.uk}.  Supported by ERC grant 306493, EPSRC grant
EP/K012045/1 and by Leverhulme research project grant RPG-2018-424.
 }
\and 
Jan Volec\thanks{
Department of Mathematics, Faculty of Nuclear Sciences and Physical Engineering, Czech Technical University in Prague, Trojanova 13, 120 00 Prague, Czech Republic, E-mail: {\tt jan@ucw.cz}. Previous affiliation: 
Faculty of Informatics, Masaryk University, Botanick\'{a} 68A, 602 00 Brno, Czech Republic. This project has received funding from the European Union’s Horizon 2020 research and innovation programme under the Marie Skłodowska-Curie grant agreement No. 800607.
This publication reflects only its authors' view; the European Research Council Executive Agency is not responsible for any use that may be made of the information it contains.
}
}

\maketitle

\begin{abstract}
For a real constant $\alpha$, let $\pi_3^\alpha(G)$ be the minimum of twice the number of $K_2$'s plus $\alpha$ times the number of $K_3$'s over all edge decompositions of $G$ into copies of $K_2$ and $K_3$, where $K_r$ denotes the complete graph on $r$ vertices. Let $\pi_3^\alpha(n)$ be the maximum of $\pi_3^\alpha(G)$ over all graphs $G$ with $n$ vertices. 

The extremal function $\pi_3^3(n)$ was first studied by Gy\H{o}ri and Tuza [Decompositions of graphs into complete subgraphs of given order, \emph{Studia Sci.\ Math.\ Hungar.} 22 (1987), 315--320]. 
In a recent progress on this problem, Kr\'al', Lidick\'y, Martins and Pehova [Decomposing graphs into edges and triangles, \emph{Combin.\ Prob.\ Comput.} 28 (2019) 465--472] proved via flag algebras that $\pi_3^3(n)\le (1/2+o(1))n^2$. 
We extend their result by determining the exact value of $\pi_3^\alpha(n)$ and the set of extremal graphs for all $\alpha$ and sufficiently large~$n$. In particular, we show  for $\alpha=3$ that $K_n$ and the complete bipartite graph $K_{\lfloor n/2\rfloor,\lceil n/2\rceil}$ are the only possible extremal examples for large~$n$.
\end{abstract}

\textbf{Keywords: } maximum triangle packing, edge decomposition into cliques, stability property


\section{Introduction}

In a recent progress on a problem of Gy\H{o}ri and Tuza \cite{Tuza91}, Kr\'al', Lidick\'y, Martins and Pehova~\cite{KralLidickyMartinsPehova19} proved via flag algebras that the edges of any $n$-vertex graph can be decomposed into copies of $K_2$ and $K_3$ whose total number of vertices is at most $(1/2+o(1))n^2$, where $K_r$ denotes the clique on $r$ vertices. The origins of this problem can be traced back to Erd\H{o}s, Goodman and P\'{o}sa \cite{ErdosGoodmanPosa66} who considered the problem of minimising the total number of cliques in an edge decomposition of an arbitrary $n$-vertex graph. They showed the following:

\begin{theorem}[\mdseries Erd\H{o}s, Goodman, P\'{o}sa \cite{ErdosGoodmanPosa66}]\label{thm:erdosgoodmanposa}
The edges of every $n$-vertex graph can be decomposed into at most $\lfloor n^2/4\rfloor$ complete graphs.
\end{theorem}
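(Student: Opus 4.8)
The plan is to prove, by induction on $n$, the equivalent statement that an $n$-vertex graph $G$ has an edge-decomposition into at most $\lfloor n^2/4\rfloor$ cliques, and to carry out the induction by deleting a single well-chosen vertex. The bookkeeping is governed by the identity $\lfloor n^2/4\rfloor=\lfloor n/2\rfloor+\lfloor (n-1)^2/4\rfloor$, which holds for both parities of $n$. So it is enough to show: \emph{every $n$-vertex graph $G$ has a vertex $v$ whose incident edges can be partitioned into at most $\lfloor n/2\rfloor$ cliques of $G$, each containing $v$.} Granting this, I would delete $v$ together with its incident edges, use those $\le\lfloor n/2\rfloor$ cliques, and apply the inductive hypothesis to $G-v$ (an $(n-1)$-vertex graph) to decompose the rest into $\le\lfloor (n-1)^2/4\rfloor$ cliques; adding up and invoking the identity finishes the step. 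The base case $n\le 1$ is vacuous.

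Next I would translate the local requirement into a colouring statement. A clique of $G$ containing $v$ has the form $\{v\}\cup S$ with $S$ a clique of $G[N(v)]$, and it contains precisely the star edges $\{vu:u\in S\}$ among those at $v$; hence partitioning the star at $v$ into cliques through $v$ is the same as partitioning the vertex set $N(v)$ into cliques of $G[N(v)]$, and the least number of parts is exactly $\chi\!\left(\overline{G}[N(v)]\right)$, the chromatic number of the complement of $G$ restricted to $N(v)$. One caveat to record: such cliques $\{v\}\cup S$ may also consume some edges lying \emph{inside} $N(v)$; this is harmless, because I then apply induction to $G-v$ with those already-covered edges removed, which is still an $(n-1)$-vertex graph.

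It then remains to exhibit a vertex $v$ with $\chi\!\left(\overline{G}[N(v)]\right)\le\lfloor n/2\rfloor$, and here I would take $v$ of minimum degree and bound the chromatic number in two complementary ways. Trivially $\chi\!\left(\overline{G}[N(v)]\right)\le |N(v)|=\deg(v)=\delta(G)$. On the other hand, greedy colouring gives $\chi\!\left(\overline{G}[N(v)]\right)\le \Delta\!\left(\overline{G}[N(v)]\right)+1\le \Delta(\overline{G})+1=n-\delta(G)$, using $\Delta(\overline{G})=n-1-\delta(G)$. Combining, $\chi\!\left(\overline{G}[N(v)]\right)\le\min\{\delta(G),\,n-\delta(G)\}\le\lfloor n/2\rfloor$, which is exactly what the induction needs.

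I expect essentially all the content to lie in spotting that $\chi\!\left(\overline{G}[N(v)]\right)$ is the right quantity to control and that it is squeezed simultaneously by $\deg(v)$ (small when $G$ is sparse) and by $n-\delta(G)$ (small when $G$ is dense), so that a minimum-degree vertex balances the two estimates under $\lfloor n/2\rfloor$; the floor identity and the base case are routine. If setting up the uniform double bound feels slippery, an equivalent fallback is a case split on $\delta(G)$: if $\delta(G)\le\lfloor n/2\rfloor$ just cover the star at a minimum-degree vertex by single edges, while if $\delta(G)\ge\lfloor n/2\rfloor+1$ then $\overline{G}$ has maximum degree at most $\lceil n/2\rceil-2$, so $\overline{G}[N(v)]$ is greedily $\lfloor n/2\rfloor$-colourable for any $v$.
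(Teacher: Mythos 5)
The paper states this theorem as a classical result of Erd\H{o}s, Goodman and P\'osa and cites it without giving a proof, so there is no in-paper argument to compare against; your proposal has to stand on its own, and it does. The bookkeeping identity $\lfloor n^2/4\rfloor=\lfloor n/2\rfloor+\lfloor (n-1)^2/4\rfloor$ holds for both parities; the cliques $\{v\}\cup S_i$ arising from a partition of $N(v)$ into cliques $S_1,\ldots,S_m$ of $G[N(v)]$ are pairwise edge-disjoint (the $S_i$ are disjoint), cover the star at $v$ exactly once, and what remains is a graph on $n-1$ vertices to which the inductive hypothesis applies; and the squeeze $\chi\bigl(\overline{G}[N(v)]\bigr)\le\min\{\delta(G),\,n-\delta(G)\}\le\lfloor n/2\rfloor$ for a minimum-degree vertex $v$ is verified correctly (the second bound via $\Delta(\overline{G})=n-1-\delta(G)$ and greedy colouring). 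One remark: the original Erd\H{o}s--Goodman--P\'osa argument yields the refinement mentioned in the paper immediately after the theorem, namely that cliques of size $2$ and $3$ alone suffice; your decomposition may use cliques $\{v\}\cup S_i$ of arbitrary size, so it proves exactly the statement as quoted but not that stronger form. As a proof of the stated theorem it is complete and correct.
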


The only extremal example for this bound is the \emph{(bipartite) Tur\'{a}n graph} $T_2(n):=K_{\lfloor n/2\rfloor,\lceil n/2\rceil}$, where $K_{a,b}$ denotes the complete bipartite graph with part sizes $a$ and~$b$. Moreover, this result still holds if we restrict the sizes of the cliques used in the decomposition to 2 and 3 (that is, single edges and triangles). In a series of papers published independently by Chung \cite{Chung81}, Gy\H{o}ri and Kostochka \cite{Gyori79}, and Kahn \cite{Kahn81}, they proved that in fact something stronger than Theorem~\ref{thm:erdosgoodmanposa} is true, confirming a conjecture by Katona and Tarj\'{a}n:

\begin{theorem}[\mdseries Chung \cite{Chung81}, Gy\H{o}ri and Kostochka \cite{Gyori79}, Kahn \cite{Kahn81}]\label{thm:gyorikostochka}
Every $n$-vertex graph can be edge decomposed into cliques whose total number of vertices is at most $\lfloor n^2/2\rfloor$.
\end{theorem}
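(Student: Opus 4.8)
The plan is to prove the equivalent, slightly more general statement $f(G)\le\floor{n^2/2}$ for every $n$-vertex graph $G$ --- where $f(G)$ denotes the minimum of $\sum_{Q\in\C{Q}}|Q|$ over all partitions $\C{Q}$ of $E(G)$ into cliques, a clique being identified with its vertex set --- by induction on $n$, the cases $n\le 2$ being trivial. Write $b(n):=\floor{n^2/2}-\floor{(n-1)^2/2}$, so $b(n)=n$ if $n$ is even and $b(n)=n-1$ if $n$ is odd. For the induction step it suffices to produce a vertex $v$, a set $F\subseteq E(G-v)$ of ``helper'' edges, and a clique partition $\C{Q}_0$ of the graph on $V(G)$ with edge set $F$ together with all edges of $G$ incident to $v$, such that $\sum_{Q\in\C{Q}_0}|Q|\le b(n)$: combining $\C{Q}_0$ with an optimal clique partition of the $(n-1)$-vertex graph $G-v-F$ and applying the induction hypothesis then gives $f(G)\le b(n)+\floor{(n-1)^2/2}=\floor{n^2/2}$.

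If $G$ has a vertex $v$ with $d_G(v)\le\floor{n/2}$, take $F=\emptyset$ and let $\C{Q}_0$ consist of the $d_G(v)$ edges at $v$, each viewed as a copy of $K_2$; then $\sum_{Q\in\C{Q}_0}|Q|=2d_G(v)\le 2\floor{n/2}\le b(n)$ in both parities, and we are done. Hence we may assume $\delta(G)\ge\floor{n/2}+1$, so $G$ is dense; moreover every neighbourhood is itself dense, since for $v\in V(G)$ and $u\in N_G(v)$ one has $d_{G[N_G(v)]}(u)=|N_G(u)\cap N_G(v)|\ge d_G(u)+d_G(v)-n\ge 2\delta(G)-n\ge 1$, and hence $\delta\bigl(G[N_G(v)]\bigr)\ge 2\delta(G)-n$.

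Now pick $v$ with $d_G(v)=\delta(G)$. The complement $\overline{G[N_G(v)]}$ has $\delta(G)$ vertices and maximum degree at most $\delta(G)-1-(2\delta(G)-n)=n-\delta(G)-1$, so a greedy colouring shows it is $(n-\delta(G))$-colourable; equivalently, $N_G(v)$ partitions into cliques $Q_1,\dots,Q_\ell$ of $G$ with $\ell\le n-\delta(G)$. Take $\C{Q}_0=\{\,\{v\}\cup Q_i:1\le i\le\ell\,\}$ (each set a clique of $G$) and $F=\bigcup_i E(G[Q_i])\subseteq E(G-v)$; this covers precisely the edges at $v$ together with $F$, with total size $\sum_i(|Q_i|+1)=d_G(v)+\ell\le\delta(G)+(n-\delta(G))=n$. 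For even $n$ this equals $b(n)$ and the step is complete. For odd $n$ we overshoot $b(n)=n-1$ by exactly one, and this --- together with the more basic point that a one-vertex peel is too wasteful for some highly structured dense graphs (e.g.\ near-complete ones), so that the crude target $b(n)$ need not be met in general --- is the step I expect to be the main obstacle. Overcoming it requires saving one further vertex in the peel (Brooks' theorem applied to $\overline{G[N_G(v)]}$ does this unless that graph has a component which is a complete graph or an odd cycle, and the surviving configurations are rigid enough to handle directly), or else peeling a larger substructure with an accounting finer than $b(n)$; this is precisely the technical core of the arguments of Gy\H{o}ri--Kostochka, Chung and Kahn, and I would complete the proof by following one of them.

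Finally, the bound is best possible: the Tur\'an graph $T_2(n)=K_{\floor{n/2},\ceil{n/2}}$ is triangle-free, so every clique partition of it consists solely of single edges and has total size $2e(T_2(n))=2\floor{n^2/4}=\floor{n^2/2}$. As an aside, if one is restricted to copies of $K_2$ and $K_3$ only, the minimum total number of vertices equals $2e(G)-3\nu_3(G)$, where $\nu_3(G)$ is the maximum number of pairwise edge-disjoint triangles in $G$; this can exceed $\floor{n^2/2}$ for small graphs such as $K_4$ and $K_5$, so cliques larger than triangles really are needed in Theorem~\ref{thm:gyorikostochka} in general, even though none are needed for the extremal graph $T_2(n)$.
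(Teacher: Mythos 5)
First, a point of order: the paper does not prove Theorem~\ref{thm:gyorikostochka} at all --- it is quoted as a known result of Chung, Gy\H{o}ri--Kostochka and Kahn --- so there is no in-paper argument to compare yours against; your proposal has to stand on its own. It does not: it is an explicitly incomplete proof. The inductive framework and the easy half are fine. If some vertex has degree at most $\lfloor n/2\rfloor$, peeling it with single edges stays within the budget $b(n)$ in both parities, and otherwise the greedy-colouring peel of $N_G(v)$ for a minimum-degree vertex $v$ correctly costs at most $d_G(v)+\ell\le \delta(G)+(n-\delta(G))=n$, which meets the budget $b(n)=n$ when $n$ is even.

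The genuine gap is the odd case, where the budget is $b(n)=n-1$ and your peel overshoots by one. You note that Brooks' theorem applied to $\overline{G[N_G(v)]}$ would save the needed unit except when that graph has a complete component or an odd-cycle component, and you then defer the treatment of those exceptional configurations to the original papers. That deferred part is not a tidy-up; it is where the entire content of the theorem lives. For example, for $G=K_n$ with $n$ odd, $\overline{G[N_G(v)]}$ consists of isolated vertices (complete components $K_1$), so the Brooks exception occurs, your peel costs exactly $n>n-1$, and one must instead take the single clique $K_n$ (or otherwise restructure the induction); more generally the exceptional configurations include a whole family of near-complete and join-like dense graphs, and handling them while keeping the induction hypothesis applicable to the residual $(n-1)$-vertex graph is precisely the delicate case analysis of Gy\H{o}ri--Kostochka. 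Since your induction alternates parities, the unproved odd step invalidates the argument for all large $n$, not merely for odd $n$. As written, the proposal establishes only the easy reductions and correctly locates, but does not overcome, the main obstacle.
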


For a given graph $G$ on $n$ vertices, let $\pi_k(G)$ be the minimum over all decompositions of the edges of $G$ into cliques $C_1,\ldots,C_\ell$ of size at most $k$ of the sum $|C_1|+|C_2|+\cdots+|C_\ell|$, where $|G|:=|V(G)|$ denotes the \emph{order} of a graph~$G$. Let $\pi_k(n)$ be the maximum of $\pi_k(G)$ over all graphs $G$ with $n$ vertices.
With this notation, the conclusion of the above theorem is that $\min_{k\in \mathbb{N}} \pi_k(n)\le \lfloor n^2/2\rfloor$. In light of Theorem~\ref{thm:gyorikostochka}, Tuza~\cite{Tuza91} conjectured that $\pi_3(n)\le n^2/2+o(n^2)$, and in fact that $\pi_3(n)\le n^2/2+O(1)$. Gy\H{o}ri and Tuza~\cite{GyoriTuza87} showed that $\pi_3(n)\le 9n^2/16$. This was the best known bound until recently, when using the celebrated flag algebra method of Razborov \cite{Razborov07}, Kr\'al', Lidick\'y, Martins and Pehova~\cite{KralLidickyMartinsPehova19} proved the asymptotic version of Tuza's conjecture:

\begin{theorem}[\mdseries Kr\'al', Lidick\'y, Martins and Pehova~\cite{KralLidickyMartinsPehova19}]\label{thm:klmp}
We have $\pi_3(n)\le (1/2+o(1))n^2$ as $n\to\infty$.
\end{theorem}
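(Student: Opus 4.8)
The plan is to recast the problem as a fractional triangle‑packing question and then bound it with the flag algebra method. In any edge‑decomposition of an $n$‑vertex graph $G$ into $t$ triangles and $s$ single edges one has $3t+s=e(G)$, so the quantity measured by $\pi_3$, namely $3t+2s$, equals $2e(G)-3t$; taking $t$ as large as possible gives
\[\pi_3(G)=2e(G)-3\nu_3(G),\]
where $\nu_3(G)$ is the maximum number of pairwise edge‑disjoint triangles in $G$. Thus Theorem~\ref{thm:klmp} is equivalent to the lower bound $\nu_3(G)\ge \tfrac23 e(G)-(\tfrac16+o(1))n^2$. Since for a fixed graph $H$ the integer and fractional $H$‑packing numbers of an $n$‑vertex graph differ by only $o(n^2)$ (Haxell--R\"odl), it suffices to prove this for the fractional triangle‑packing number $\nu_3^*(G)$; that is, for each $G$ one must exhibit nonnegative weights $x_T$ on its triangles with $\sum_{T\ni e}x_T\le 1$ for every edge $e$ and $\sum_T x_T\ge \tfrac23 e(G)-(\tfrac16+o(1))n^2$.

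The natural packing is $x_T:=1/\max\{t_{e_1},t_{e_2},t_{e_3}\}$, where $e_1,e_2,e_3$ are the edges of the triangle $T$ and $t_e=|N(u)\cap N(v)|$ is the codegree of $e=uv$: it is feasible because each edge $e$ lies in exactly $t_e$ triangles, each of weight at most $1/t_e$, and it is an asymptotically perfect packing on $K_n$ and on balanced complete multipartite graphs. The obstacle to using flag algebras directly is that $1/\max$ of codegrees is not a polynomial in subgraph densities, so I would discretize: split the range $[0,n]$ of codegrees into a bottom interval $[0,\varepsilon n)$ and geometric intervals $[(1+\delta)^i\varepsilon n,(1+\delta)^{i+1}\varepsilon n)$, a bounded number of them, and colour each edge by the interval containing its codegree. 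Replacing $\max_e t_e$ by the right endpoint of its interval changes $x_T$ by a factor of at most $1+\delta$ and keeps feasibility, so $x_T$ now depends only on the colour‑pattern of $T$, and $\sum_T x_T$ becomes $\tfrac1n$ times a sum over the triangles of $G$ of a bounded function of their colour‑patterns, i.e.\ a fixed linear combination of densities $d(\,\cdot\,,G)$ of edge‑coloured copies of $K_3$. After dividing by $n^2$ the target inequality becomes
\[\sum_p c_p\, d\bigl(K_3^{(p)},G\bigr)\ \ge\ 2\,d(K_2,G)-1-o(1),\]
required for every graph $G$ whose edge‑colouring is consistent with its codegrees. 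That consistency condition is what makes the statement nontrivial -- it forces the number of triangles to grow with the colours of the edges, and without it the inequality is false -- and one checks that the inequality is tight exactly for $K_n$ (all codegrees in the top interval) and, as $\delta,\varepsilon\to0$, for $K_{\lfloor n/2\rfloor,\lceil n/2\rceil}$ (all codegrees $0$, both sides vanishing).

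Finally I would prove the displayed inequality over the theory of codegree‑consistently edge‑coloured graphs by the semidefinite method: choose a suitable finite family of types and flags, assemble the corresponding Cauchy--Schwarz / positive‑semidefiniteness certificates, solve the resulting semidefinite program numerically, and round the approximate dual solution to an exact rational certificate, which establishes $\pi_3(n)\le(1/2+o(1))n^2$ rigorously; inspecting which coloured subgraphs must have density $0$ in a tight instance should moreover single out $K_n$ and $K_{\lfloor n/2\rfloor,\lceil n/2\rceil}$, as claimed in the abstract. I expect the real obstacle to be precisely this last part together with the choice of encoding: one must pick the discretization and the set of flags so that the SDP certifies the constant $1/2$ and not something larger (the two very different tight configurations $K_n$ and $K_{\lfloor n/2\rfloor,\lceil n/2\rceil}$ both have to lie in the kernel of the certificate), and then carry out the rounding. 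The remaining ingredients -- the packing reduction, the appeal to Haxell--R\"odl, and extracting the extremal graphs -- are comparatively routine.
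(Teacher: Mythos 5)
Your reduction $\pi_3(G)=2e(G)-3\nu(G)$ and the appeal to Haxell--R\"odl to pass between integral and fractional packings are both fine and match what the paper does. But the proposal stops exactly where the theorem begins: everything of substance is deferred to ``solve the resulting semidefinite program numerically and round it'', and you yourself flag this as the real obstacle. The existence of such a certificate \emph{is} the content of the theorem, so what you have written is a programme, not a proof. Worse, your encoding adds an extra unverified hypothesis that the paper's approach avoids: by committing to the specific codegree packing $x_T=1/\max\{t_{e_1},t_{e_2},t_{e_3}\}$, you must prove that this \emph{particular} feasible packing is within $o(n^2)$ of optimal for \emph{every} dense graph, not just for $K_n$ and $T_2(n)$. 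If there is any near-extremal configuration on which the codegree packing is suboptimal by $\Omega(n^2)$, the coloured inequality you want the SDP to certify is simply false and no certificate exists; you give no argument ruling this out. There is also a soundness issue with the discretization: a flag-algebra SDP over ``codegree-consistently edge-coloured graphs'' needs the consistency condition itself to be encoded as polynomial constraints on coloured subgraph densities (codegree is a global quantity of $G$, not a local property of a bounded-size coloured subgraph), and you do not explain how to do this.

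For comparison, the paper sidesteps the non-polynomiality of the objective entirely. Instead of fixing a global packing, it takes as objective the \emph{local} optimal fractional decomposition cost $\pi_{3,f}(G[W])$ of each $7$-vertex induced subgraph $W$ --- a genuine function of the isomorphism type of $G[W]$, hence directly a linear combination of $7$-vertex subgraph densities with no colouring needed --- and proves via one explicit $7\times 7$ positive semidefinite matrix $M$ and seven $4$-vertex rooted flags that the average of $\pi_{3,f}(G[W])$ over all $W\in\binom{V(G)}{7}$ is at most $21+o(1)$ (Lemma~\ref{lem:flag}). The global bound then follows from the subadditivity/averaging inequality $\binom{n}{2}^{-1}\pi_{3,f}(G)\le\binom{7}{2}^{-1}\binom{n}{7}^{-1}\sum_W\pi_{3,f}(G[W])$, obtained by superimposing optimal fractional decompositions of all $7$-vertex induced subgraphs. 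If you want to salvage your route, you would need either to produce the exact certificate for your coloured encoding (after first proving the codegree packing is asymptotically optimal), or to switch to the paper's local-objective formulation, where the certificate is already exhibited.
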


In this paper we show, by building upon the proof in \cite{KralLidickyMartinsPehova19}, that for all large $n$ it holds in fact $\pi_3(n)\le n^2/2+1$. Moreover, if a graph $G$ of order $n$ attains $\pi_3(n)$ then $G$ is the complete graph $K_n$ or the Tur\'{a}n graph~$T_2(n)$. 

Which of these two graphs is extremal is a matter of divisibility of $n$ by 6. In the case of the Tur\'{a}n graph, we trivially have $\pi_3(T_2(n))=2\lfloor n/2\rfloor \lceil n/2\rceil$, giving $n^2/2$ for even $n$ and $(n^2-1)/2$ for odd $n$. In order to determine $\pi_3(K_n)$, we have to 
determine the maximum number of edge-disjoint triangles in~$K_n$. Clearly, the graph made of their edges is \emph{triangle-divisible}, that is, each vertex has even degree and the total number of edges is divisible by three. It is routine to see that the minimum size of a graph $H$ on $n$ vertices whose complement $\O H$ is triangle-divisible is attained by taking at most one copy of the claw $K_{1,3}$ and a perfect matching on the remaining vertices  for even $n$, and isolated vertices plus at most one copy of the 4-cycle $K_{2,2}$ for odd~$n$. (Note that ${n\choose 2}$ is never equal to 2 modulo~$3$.) In fact, this gives the value of $\pi_3(K_n)$ for all large $n$ by  the following general result (which we will use also inside our proof).

\begin{theorem}[Barber, Kuhn, Lo and Osthus \cite{BarberKuhnLoOsthus16am}]\label{thm:decomp}
For every $\varepsilon > 0$, if $G$ is a triangle-divisible graph of large order $n$ and minimum degree at least $(0.9+\varepsilon)n$, then $G$ has a perfect triangle decomposition.
\end{theorem}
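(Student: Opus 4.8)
Since this is a result from the literature I only sketch the strategy; it relies on the method of \emph{iterative absorption}, which reduces the existence of an integer $K_3$-decomposition to that of a \emph{fractional} one (together with the divisibility hypothesis), and which in \cite{BarberKuhnLoOsthus16am} is carried out for an arbitrary graph $F$ in place of $K_3$, the relevant minimum-degree threshold for $F=K_3$ being $9/10$. Concretely, the plan is as follows. First I would fix a small $\mu>0$ and, using $\delta(G)\ge(0.9+\e)n$, build a \emph{vortex}: a nested sequence of vertex sets $V(G)=U_0\supseteq U_1\supseteq\cdots\supseteq U_\ell$ with $|U_{i+1}|\approx\mu|U_i|$ and $|U_\ell|=O(1)$, chosen by a standard probabilistic argument so that each $G[U_i]$ retains essentially the same minimum-degree ratio as $G$. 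Simultaneously, at the outset I would reserve a sparse \emph{absorbing} subgraph $A\subseteq G$ with the property that $A\cup L$ has a perfect $K_3$-decomposition for \emph{every} triangle-divisible graph $L$ with $V(L)\subseteq U_\ell$ that is edge-disjoint from $A$; such an $A$ is assembled from many bounded-size gadgets, each of which can ``switch'' between two different $K_3$-decompositions.

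The heart of the proof is then an iteration. At step $i$ one holds a triangle-divisible subgraph $G_i\subseteq G$ with $G_i[U_i]$ of high relative minimum degree, and wants to delete a family of edge-disjoint triangles so that what remains, $G_{i+1}$, is triangle-divisible and has all of its edges inside $U_{i+1}$. To achieve this I would first perform a bounded number of \emph{divisibility corrections} (adding or removing a small collection of triangles to fix degree parities and the edge-count residue modulo $3$ relative to $U_{i+1}$), then apply the fractional $K_3$-decomposition theorem for graphs of minimum degree at least $0.9n$ --- this is exactly where the hypothesis of the theorem enters --- to the part of $G_i$ not contained in $U_{i+1}$, and finally convert that fractional decomposition into an integer $K_3$-packing by a R\"odl-nibble / Pippenger--Spencer argument, thereby covering all but a tiny fraction of the edges outside $U_{i+1}$ and cleaning up the few that remain using triangles through $U_i\setminus U_{i+1}$. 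After $\ell=O(\log n)$ rounds the leftover is a triangle-divisible graph with edge set inside $U_\ell$, so the reserved absorber $A$ finishes the $K_3$-decomposition of $G$.

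The hard part will be the construction and verification of the absorber $A$: one must show that a single structure of bounded size can absorb \emph{every} possible triangle-divisible leftover on $U_\ell$, which requires a careful combination of elementary ``transformer'' gadgets with the decomposition results for graphs of high minimum degree, all the while keeping $A$ sparse enough that it does not interfere with the covering steps. Maintaining triangle-divisibility exactly (rather than merely approximately) at each stage of the iteration is the second technical point that needs care; it is also the reason the hypothesis is stated for triangle-divisible $G$, since divisibility modulo $2$ and modulo $3$ is the only obstruction that survives the absorption.
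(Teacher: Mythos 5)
The paper does not prove this statement at all: it is quoted verbatim as a known theorem of Barber, K\"uhn, Lo and Osthus and used as a black box, with only the remark that the constant $0.9$ originates in Dross's fractional triangle decomposition theorem. Your sketch is an accurate outline of the iterative-absorption argument actually used in the cited reference (vortex, absorber for leftovers on the final set, and conversion of fractional decompositions into near-perfect packings, which is exactly where the minimum-degree hypothesis enters), so it matches the authoritative proof in approach, though of course it is a summary of that argument rather than a self-contained proof.
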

The constant $0.9$ in the minimum degree condition in Theorem~\ref{thm:decomp} comes from the result of Dross~\cite{Dross16siamjdm} on fractional triangle decompostions,
and it was conjectured by Nash-Williams~\cite{NashWilliams70} that it can be replaced by $3/4$.
Very recently, Dukes and Horsley~\cite{DukesHorsley19arxiv}
and Delcourt and Postle~\cite{DelcourtPostle19arxiv} improved the constant to $0.852$ and $(7+\sqrt{21})/14=0.8273...$, respectively.

Let us list the values of $\pi_3$ for the graphs $K_n$ and $T_2(n)$ for large~$n$.


\begin{table}[h]
$$
\renewcommand{\arraystretch}{1.3}
\begin{array}{c|c|c|c}
n \bmod{6} & \mbox{$K_2$'s in an optimal decomposition of }K_n & \pi_3(K_n) & \pi_3(T_2(n)) \\
\hline
0 & \mbox{perfect matching} &  \frac{n^2}{2} & \frac{n^2}{2}\\
1 & \mbox{none} & \binom{n}{2} & \frac{n^2-1}{2} \\
2 & \mbox{perfect matching} & \frac{n^2}{2} & \frac{n^2}{2}\\
3 & \mbox{none} & \binom{n}{2} & \frac{n^2-1}{2} \\
4 & K_{1,3}\mbox{ + perfect matching} & \frac{n^2}{2}+1 & \frac{n^2}{2}\\
5 & C_4 & \binom{n}{2}+4 & \frac{n^2-1}{2}
\end{array}
$$
\caption{Values of $\pi_3(K_n)$ and $\pi_3(T_2(n))$ for large $n$.}
\label{ta:1}
\end{table}

Let us define 
\[
\C E_n := \begin{cases}  
\{T_2(n),K_n\},  & \text{ if }  n \equiv 0,2 \pmod 6,\\
\{T_2(n)\}, & \text{ if }  n \equiv 1,3,5 \pmod 6, \\  
\{K_n\}, & \text{ if }  n \equiv 4 \pmod 6,   
\end{cases}
\]
 and
$$
\ell(n):=\left\{ 
\begin{array}{lll}
n^2/2, & \textrm{ for } n \equiv 0,2 \pmod 6,\\
(n^2-1)/2, & \textrm{ for }  n \equiv 1,3,5 \pmod 6,\\
 n^2/2+1,	& \textrm{ for }  n \equiv 4 \pmod 6.
\end{array}\right.
$$
 Thus, by the calculations of Table~\ref{ta:1}, we have  for all large $n$ that $\C E_n$ consists of those graphs in $\{T_2(n),K_n\}$ which maximise $\pi_3$ while $\ell(n)$ is this maximum value.  

Clearly, $\ell(n)$ is a lower bound on $\pi_3(n)$
for large~$n$. Our main result is that this is equality.

\begin{theorem}\label{th:1} There exists $n_0\in \mathbb{N}$ such that for all $n\ge n_0$, we have $\pi_3(n)=\ell(n)$ and the set of $\pi_3(n)$-extremal graphs up to isomorphism is exactly $\C E_n$. 
\hide{This gives
\[
\pi_3(n) =\left\{ 
\begin{array}{lll}
n^2/2 & \textrm{ for } n \equiv 0,2 \bmod 6 & \textrm{attained only by }T_2(n)\textrm{ and }K_n,\\
(n^2-1)/2 & \textrm{ for }  n \equiv 1,3,5 \bmod 6 & \textrm{attained only by }T_2(n),\\
n^2/2+1	& \textrm{ for }  n \equiv 4 \bmod 6 & \textrm{attained only by }K_n.
\end{array}\right.
\]
}
\end{theorem}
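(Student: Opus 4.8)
The plan is to combine the asymptotic/stability information coming from the flag-algebra proof of Theorem~\ref{thm:klmp} with the exact decomposition tool Theorem~\ref{thm:decomp} to pin down the extremal graphs. First I would extract from \cite{KralLidickyMartinsPehova19} not just the bound $\pi_3(n)\le(1/2+o(1))n^2$ but an accompanying \emph{stability} statement: there is $\delta>0$ such that any $n$-vertex graph $G$ with $\pi_3(G)\ge (1/2-\delta)n^2$ is, after deleting/adding at most $\delta n^2$ edges, either complete or complete bipartite. (If the published flag-algebra proof does not literally state this, the standard perturbation argument — the flag-algebra certificate forces the graph to be close to one of the optimal constructions — recovers it; this robustness step is where most of the care is needed.) Since $\ell(n)=(1/2+o(1))n^2$ and $T_2(n),K_n$ attain $\ell(n)$, any $\pi_3(n)$-extremal $G$ is $\delta$-close to $K_n$ or to $T_2(n)$.

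Next I would handle the two regimes separately. In the ``close to $T_2(n)$'' case, suppose $G$ has a near-balanced-bipartition with few edges inside the parts and possibly few non-edges across. The key observation is that every triangle in such a $G$ must use at least one ``inside'' edge; by a counting/exchange argument, replacing each triangle by its (at most three) edges and then optimizing shows $\pi_3(G)\le 2e(G)\le 2\lfloor n/2\rfloor\lceil n/2\rceil=\ell(n)$ or $\ell(n)-1$, with equality analysis forcing $G=T_2(n)$ exactly; here one uses that adding an inside edge creates only $O(n)$ triangles but costs $2$ per edge, so it cannot help, and deleting a cross edge strictly hurts. In the ``close to $K_n$'' case, $G$ has minimum degree at least $(1-\delta)n > (0.9+\varepsilon)n$, so one wants to decompose almost all of $G$ into triangles via Theorem~\ref{thm:decomp}. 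Concretely, I would first greedily remove a bounded number of edges forming a small graph $H$ so that $\overline{H\cup(K_n\setminus G)}$ — equivalently $G$ minus a small ``correction graph'' — becomes triangle-divisible while keeping min-degree above $0.9n$; then Theorem~\ref{thm:decomp} gives a perfect triangle decomposition of that part, contributing $3\cdot(e/3)=e$ to the total vertex count, and the leftover $O(n)$ edges are decomposed trivially into $K_2$'s at cost $2$ each. Optimizing the size and structure of the correction graph over all triangle-divisibility-fixing choices, as a function of $e(G)$ and $n\bmod 6$, reproduces exactly the entries of Table~\ref{ta:1}; pushing $e(G)$ below $\binom n2$ only helps if it makes $T_2(n)$'s construction available, which returns us to the first case.

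Finally I would assemble the pieces: in each residue class modulo $6$, compare the best value achievable by a graph close to $K_n$ with the best value achievable by a graph close to $T_2(n)$; the maximum of the two is $\ell(n)$, attained precisely by the graphs in $\C E_n$, and the equality analyses above show no other graph (not exactly equal to $K_n$ or $T_2(n)$) reaches $\ell(n)$. This yields $\pi_3(n)=\ell(n)$ and identifies the extremal set as $\C E_n$ for $n\ge n_0$.

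I expect the main obstacle to be the ``close to $K_n$'' case, and specifically the bookkeeping of triangle-divisibility. One must show that from \emph{any} graph $G$ that is $\delta$-close to $K_n$ one can delete a correction graph of the \emph{exact minimum} size dictated by Table~\ref{ta:1} (a near-perfect matching plus possibly a $K_{1,3}$, or isolated vertices plus possibly a $C_4$) so that the remainder is triangle-divisible and still dense enough for Theorem~\ref{thm:decomp}; handling the parity of every vertex degree and the residue of the edge count simultaneously, while controlling exactly how many edges are sacrificed, is the delicate combinatorial core. A secondary difficulty is making the stability step quantitatively strong enough — ensuring $\delta$ can be taken large enough that ``$\delta$-close to $K_n$'' already forces min-degree $>0.9n$ with room to spare for the correction — which may require a sharper stability statement than what is immediately stated in \cite{KralLidickyMartinsPehova19}.
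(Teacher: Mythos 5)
Your overall architecture (flag-algebra stability reducing to the two cases ``close to $T_2(n)$'' and ``close to $K_n$'', then exact analysis via Theorem~\ref{thm:decomp}) is the same as the paper's, but three of your concrete steps would fail as written.

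First, in the close-to-$T_2(n)$ case your chain $\pi_3(G)\le 2e(G)\le 2\lfloor n/2\rfloor\lceil n/2\rceil$ is false: a graph $\delta n^2$-close to $T_2(n)$ may have $e(G)=t_2(n)+k$ with $k$ as large as $\delta n^2$, and then $2e(G)>2t_2(n)$. The remark that an inside edge ``creates only $O(n)$ triangles'' does not close this gap; what is actually needed is that a graph with $t_2(n)+k$ edges contains at least $k-o(k)$ \emph{edge-disjoint} triangles, so that the $k$ surplus edges can be absorbed into triangles at net cost $2k-3(k-o(k))\le 0$. This is a nontrivial theorem of Gy\H{o}ri, which is exactly the input the paper uses in Lemma~\ref{lm:keyT2n}; your exchange argument would have to reprove it. Second, in the close-to-$K_n$ case you implicitly take $K_n$ itself as the benchmark, but for $n\equiv 1,3\pmod 6$ the graph $K_n$ is \emph{not} the maximiser among graphs close to $K_n$: removing a matching of size $m\equiv 2\pmod 3$ raises $\pi_3$ from $\binom n2$ to $\binom n2+2$ (the class $\C E'_n$ in the paper). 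Any correct argument must identify this larger extremal family and only then beat it by $T_2(n)$ via $(n-1)/2>2$; the delicate point is the lower bound showing that \emph{every other} nearby graph does strictly worse, which the paper gets from the augmenting-path inequality $|Y_W|\ge\binom{|X|}{2}+|M|(|X|-\I 1_{|X|=1})$ relating missing edges to odd-degree vertices. Your sketch only optimises the correction graph for a fixed $G$, i.e.\ the upper-bound direction for $K_n$, and never rules out competitors. Third, your proposed fix for the minimum-degree requirement of Theorem~\ref{thm:decomp} --- sharpening the stability constant --- cannot work: edit distance $\delta n^2$ from $K_n$ is compatible with a vertex of degree $0$. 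The paper instead peels off vertices of degree below $n/8$ one at a time (Lemma~\ref{lm:Kn}) and shows the peeled graph cannot be extremal; some such vertex-deletion step is unavoidable.
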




A simple corollary of Theorem~\ref{th:1} is an affirmative answer to a question of Pyber~\cite{Pyber92}, see also~\cite[Problem~45]{Tuza91}, for sufficiently large $n$.
A \emph{covering} of a graph $G$ is a collection of subgraphs of $G$ such that every edge of $G$ appears in at least one subgraph. (For comparison, a decomposition requires that every edge appears in exactly one subgraph.)

\begin{corollary}
There exists $n_0\in \mathbb{N}$ such that for all $n\ge n_0$, the edge set of every $n$-vertex graph can be covered with triangles  and edges so that the sum of their orders is at most $\lfloor n^2/2 \rfloor$.
\end{corollary}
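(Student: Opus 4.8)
The plan is to deduce the covering statement from the decomposition statement of Theorem~\ref{th:1}, using the fact that a covering is a strictly weaker notion than a decomposition, together with a single elementary ``discount'' trick to absorb the additive slack in $\ell(n)$. First, fix $n$ large enough that Theorem~\ref{th:1} applies, and let $G$ be an arbitrary $n$-vertex graph. If $G$ is not the complete graph $K_n$, then by Theorem~\ref{th:1} we have $\pi_3(G)\le\max\{\pi_3(T_2(n)),\pi_3(K_n)\}$ only when $G\in\C E_n$; more usefully, if $G\ne K_n$ then $G$ is a proper subgraph of $K_n$, and we may take any optimal decomposition of $G$ into edges and triangles, which has total order $\pi_3(G)\le\pi_3(n)=\ell(n)\le n^2/2+1$. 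For all residues except $n\equiv 4\pmod 6$ we already have $\ell(n)\le\lfloor n^2/2\rfloor$, so the decomposition itself is the desired covering and we are done. The only remaining case is $n\equiv4\pmod6$, where $\ell(n)=n^2/2+1=\lfloor n^2/2\rfloor+1$, and we must shave off one unit of order.

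To handle $n\equiv4\pmod6$, observe that the only $n$-vertex graph with $\pi_3(G)=n^2/2+1$ is $K_n$ (since $\C E_n=\{K_n\}$ in this case); every other graph $G$ on $n$ vertices satisfies $\pi_3(G)\le n^2/2$ by Theorem~\ref{th:1} together with a short argument that the ``second-best'' value is at most $n^2/2$ — indeed, if $G\ne K_n$ then $G$ has at most $\binom n2-1$ edges, and one can check directly, or invoke the proof of Theorem~\ref{th:1}, that this forces $\pi_3(G)\le n^2/2=\lfloor n^2/2\rfloor$, so again the decomposition is a valid covering. Thus it remains only to produce, for $G=K_n$ with $n\equiv4\pmod6$, a covering of $E(K_n)$ by edges and triangles of total order at most $\lfloor n^2/2\rfloor=n^2/2$. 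Take the optimal decomposition of $K_n$ described in Table~\ref{ta:1}: a copy of the claw $K_{1,3}$, a perfect matching on the other $n-4$ vertices, and a maximum triangle-free-complement triangle packing on the rest, with total order exactly $n^2/2+1$. The claw contributes $3$ edges and order $3\cdot2=6$ to that total; replace the three claw-edges by two triangles that together cover those three edges (e.g.\ if the claw has centre $u$ and leaves $x,y,z$, use the triangles $uxy$ and $uyz$, which cover $ux,uy,xy,yz$ and in particular all of $ux,uy,uz$ — wait, they miss $uz$, so instead use $uxy$ and $uxz$, covering $ux,uy,uz,xy,xz$, hence all three claw-edges plus two extra edges, which is fine for a covering). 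This replaces order $6$ by order $6$ — no gain. Instead, the cleaner move is: since we only need a \emph{covering}, drop one edge of the perfect matching entirely (the remaining structure already covers every edge of $K_n$ except possibly that one matching edge $vw$), and cover $vw$ using one of the triangles already present that contains both $v$ and $w$, or, if none does, by extending the claw: but $vw$ was a matching edge so it is a genuine edge of $K_n$ not otherwise covered.

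Let me restate the absorption step more carefully, since that is the heart of the matter and the only place real work is needed. For $n\equiv4\pmod6$ we build a covering of $K_n$ directly: remove one vertex $v$, decompose $K_{n-1}$ (note $n-1\equiv3\pmod6$, so $\ell(n-1)=((n-1)^2-1)/2$ and the extremal decomposition of $K_{n-1}$ uses only triangles) into triangles of total order $2\binom{n-1}{3}\big/\binom{n-2}{1}$ — more simply, into $\binom{n-1}2/3$ edge-disjoint triangles, total order $(n-1)^2/... $; hmm, this needs the exact count. I will instead argue abstractly: by Theorem~\ref{thm:decomp} applied to the appropriate triangle-divisible subgraph, $K_n$ minus a bounded-size ``defect'' graph $H$ has a perfect triangle decomposition; the triangles cover all edges outside $H$ with total order $\tfrac23\big(\binom n2-e(H)\big)\cdot 3 \cdot\tfrac12$... again messy. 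The genuinely clean formulation: it suffices to cover $E(K_n)$ by triangles and at most one extra edge with total order $\le n^2/2$; equivalently, since $t$ triangles have order $3t$ and cover at most $3t$ edges, and $\binom n2=n(n-1)/2$, we need $t$ edge-disjoint triangles plus $r$ single edges with $3t+r\ge\binom n2$ (covering) and $3\cdot3t+2r\le n^2/2$... this forces $t$ close to $\binom n2/3$ and $r$ small. The existence of such a near-perfect triangle packing of $K_n$ for large $n\equiv4\pmod6$ follows from Theorem~\ref{thm:decomp}: remove a bounded defect to make the complement triangle-divisible, decompose the rest, and cover the few defect edges by single edges, checking the order arithmetic closes with room to spare.

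The main obstacle is precisely this last arithmetic bookkeeping in the case $n\equiv4\pmod6$: one must verify that trading the ``expensive'' claw-plus-matching correction used in the optimal \emph{decomposition} for a slightly different \emph{covering} structure saves exactly the one unit of order needed to get from $n^2/2+1$ down to $\lfloor n^2/2\rfloor=n^2/2$. I expect this to be a short but slightly fiddly calculation, most cleanly done by invoking Theorem~\ref{thm:decomp} to get an almost-perfect triangle decomposition of $K_n$ (rather than the explicit optimal one of Table~\ref{ta:1}) and noting that a covering, unlike a decomposition, lets us cover the $O(1)$ leftover edges by reusing triangles or by adding $O(1)$ single edges of total order $O(1)=o(n^2)$, which is comfortably below the budget. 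All other residue classes are immediate from Theorem~\ref{th:1} since there $\ell(n)\le\lfloor n^2/2\rfloor$ already.
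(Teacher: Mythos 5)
Your reduction is correct and matches the paper's: all residues other than $n\equiv4\pmod6$ follow immediately from Theorem~\ref{th:1}, and for $n\equiv4\pmod6$ the integrality of $\pi_3$ together with the uniqueness of the extremal graph handles every $G\neq K_n$. The problem is that you never actually finish the one case that needs work, namely $G=K_n$ with $n\equiv4\pmod6$, and your attempts at it either fail or rest on a miscalculation.

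Concretely: your one explicit construction (covering the claw with centre $u$ and leaves $x,y,z$ by the two triangles $uxy$ and $uxz$) replaces order $6$ by order $6$, as you yourself observe, so it saves nothing. Your fallback argument — invoke Theorem~\ref{thm:decomp}, decompose $K_n$ minus a ``bounded defect'', and cover the leftover by $O(1)$ single edges ``comfortably below the budget'' — is wrong on the arithmetic: to make $K_n$ triangle-divisible when $n$ is even you must delete at least one edge at \emph{every} vertex, so the defect has at least $n/2$ edges (in fact $n/2+1$ here), not $O(1)$, and the resulting decomposition has total order exactly $n^2/2+1$, i.e.\ it exceeds the budget by $1$ rather than sitting comfortably under it. There is no slack to absorb; you must save exactly one unit. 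The missing idea is the paper's one-line trick: take the optimal decomposition (claw $v_1v_2,v_1v_3,v_1v_4$, a perfect matching on the remaining $n-4$ vertices, and triangles elsewhere) and replace the \emph{two} single edges $v_1v_2$ and $v_1v_3$ by the \emph{one} triangle $v_1v_2v_3$. This changes order $2+2=4$ into order $3$, the pair $v_2v_3$ is merely covered twice (which a covering permits), and the total drops from $n^2/2+1$ to $n^2/2=\lfloor n^2/2\rfloor$. Without this (or an equivalent) step your proof is incomplete, as your closing paragraph essentially concedes.
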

\begin{proof}
Theorem~\ref{th:1} directly implies the corollary unless $n \equiv 4 \pmod 6$ and the graph under consideration is $K_n$.
So assume that $n \equiv 4 \pmod 6$. 
Denote the vertices of $K_n$ by $v_1,\ldots,v_n$.
Recall that an optimal decomposition for $K_n$ is obtained by taking edges $v_1v_2, v_1v_3, v_1v_4$ and $v_iv_{i+1}$ for all odd $i$ with $5 \leq i \leq n-1$.
The rest of the graph becomes triangle-divisible and Theorem~\ref{thm:decomp} can be applied. This gives a decomposition of cost $n^2/2+1$.
A covering of cost at most $n^2/2$ can be obtained from this decomposition by replacing edges  $v_1v_2$ and $v_1v_3$ by a triangle $v_1v_2v_3$.
(Notice that the pair $v_2v_3$ is covered by two triangles in the resulting covering.)
\end{proof}
 
 We also study an extension of Theorem~\ref{th:1}, where we consider decompositions into $K_2$'s and $K_3$'s but we modify the cost of $K_3$'s to be $\alpha$ (with the cost of $K_2$ still being 2).
 The minimum over all costs of such decompositions of a graph $G$ is denoted by $\pi_3^\alpha(G)$. 
 The maximum value of $\pi_3^\alpha(G)$ over all $n$-vertex graphs $G$ is denoted by  $\pi_3^\alpha(n)$.
 Notice that $\pi_3^3(G) = \pi_3(G)$ and $\pi_3^3(n) = \pi_3(n)$.
 Denote $K_n$ without one edge by $K_n^-$ and $K_n$ without a matching of size two by $K_n^{=}$. Then the following result holds.
 
 \begin{theorem}\label{thm:alpha}
 For every real $\alpha$ exists $n_0\in \mathbb{N}$ such that every $\pi_3^\alpha$-extremal graph $G$ with $n\ge n_0$ vertices 
satisfies the following (up to isomorphism).
\begin{itemize}
\item If $\alpha<3$, then  $G= T_2(n)$;
\item if $\alpha=3$ then Theorem \ref{th:1} applies;
\item if $3 < \alpha < 4$ and $n\equiv 0,2,4,5\pmod 6$, then $G=K_n$;
\item if $3 < \alpha < 4$ and $n\equiv 1,3\pmod 6$, then $G=K_n^=$;
\item if $\alpha=4$ and $n\equiv 1,3\pmod 6$, then $G\in \{K_n, K_n^-, K_n^=\}$ and, moreover, the  three listed graphs are all $\pi_3^\alpha$-extremal;
\item if $\alpha=4$ and $n\equiv 0,2,4,5\pmod 6$, then $G=K_n$;
\item if $4 < \alpha$, then $G=K_n$.
\end{itemize}
 \end{theorem}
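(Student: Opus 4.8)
The plan is to leverage Theorem~\ref{th:1} (the case $\alpha=3$) as the structural backbone, since that result already identifies the candidate extremal graphs $T_2(n)$ and $K_n$ and, crucially, must come with a stability statement: any $n$-vertex graph $G$ with $\pi_3(G)$ close to $\ell(n)$ is close in edit distance to $T_2(n)$ or $K_n$. I would first observe the elementary monotonicity/comparison facts relating $\pi_3^\alpha$ for different values of $\alpha$. Given an optimal $K_2$/$K_3$-decomposition of $G$ for the cost function with parameter $3$, using $m$ triangles, the same decomposition has $\pi_3^\alpha$-cost equal to $\pi_3(G) + (\alpha-3)m$. Conversely any decomposition optimal for parameter $\alpha$ can be re-costed for parameter $3$. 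This gives two-sided inequalities of the form $\pi_3^\alpha(G) \le \pi_3(G) + (\alpha-3)\,t_3(G)$ and $\pi_3^\alpha(G) \ge \pi_3(G) - |\alpha-3|\cdot(\text{max triangles used})$, where $t_3(G)$ is the maximum number of edge-disjoint triangles; the point is that deviations from the $\alpha=3$ picture are controlled linearly by the number of triangles, which is $O(n^2)$ but for the extremal candidates is exactly computable.

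Next I would compute $\pi_3^\alpha$ exactly for the finite list of candidate graphs: $T_2(n)$, $K_n$, $K_n^-$, $K_n^=$. For $T_2(n)$, being bipartite, no triangles are available, so $\pi_3^\alpha(T_2(n)) = 2\lfloor n/2\rfloor\lceil n/2\rceil$ independent of $\alpha$. For $K_n$ (and $K_n^-$, $K_n^=$), an optimal decomposition uses as many triangles as possible — this is where Theorem~\ref{thm:decomp} enters, exactly as in the computation of $\pi_3(K_n)$ in Table~\ref{ta:1} — and the number of leftover edges depends on $n \bmod 6$ as tabulated. So $\pi_3^\alpha(K_n) = 2e(K_n) - (\alpha-3)\cdot(\text{number of triangles in the optimal packing}) $ when $\alpha>3$ (wait, sign: cost is $2\cdot(\#K_2) + \alpha\cdot(\#K_3)$; replacing three edges by one triangle changes cost by $\alpha - 6$, so for $\alpha < 6$ one wants to maximize triangles). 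I would carry out these $\bmod 6$ bookkeeping computations to get closed forms, and then directly compare: for $\alpha<3$, $T_2(n)$ wins since adding triangles only helps in $K_n$ but the bipartite count still dominates by the $n^2/4$ versus $n^2/2$ gap being overcome — actually here I must be careful: for $\alpha<3$ one checks $\pi_3^\alpha(K_n) \le \pi_3(K_n) < \ell(n)$ region... the correct line is that for $\alpha$ slightly below $3$, $\pi_3(K_n)$ only decreases while $\pi_3(T_2(n))$ stays, and the asymptotic separation from the general upper bound must be re-derived. The key comparisons between $K_n, K_n^-, K_n^=$ at $\alpha = 4$ (where the cost $\alpha - 6 = -2$ makes each triangle worth exactly replacing a path of two edges, so removing edges to make the complement triangle-divisible is "free") explain why three graphs tie.

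The main obstacle is proving that \emph{no other} graph can be extremal — i.e., upgrading Theorem~\ref{th:1} to a statement about all $\alpha$. For this I would need a robust stability argument: show that if $G$ is $\pi_3^\alpha$-extremal then $\pi_3(G)$ is within $o(n^2)$ (in fact within a constant, after a bootstrapping step) of $\ell(n)$, hence $G$ is edit-close to $T_2(n)$ or $K_n$ by the stability version of Theorem~\ref{th:1}; then a local-modification / exchange argument rules out graphs that are close-but-not-equal to these, handling the at-most-$O(1)$ edge discrepancies by hand for each residue class. The delicate points are: (i) the flag-algebra bound of \cite{KralLidickyMartinsPehova19} only gives $\pi_3(n) \le (1/2+o(1))n^2$, so I need to invoke the full strength of Theorem~\ref{th:1} together with its stability corollary (which I would state and use as a lemma proved alongside Theorem~\ref{th:1}); and (ii) the cost function for $\alpha \ne 3$ can favor either many triangles ($\alpha < 6$) or few ($\alpha > 6$), so the direction of the optimization flips, and near $\alpha = 3$ and $\alpha = 4$ the margins between competing graphs shrink to $O(1)$, forcing exact rather than asymptotic comparisons. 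I expect the cleanest write-up to split into the regimes $\alpha < 3$, $3 < \alpha < 4$, $\alpha = 4$, $\alpha > 4$ (with $\alpha=3$ deferred to Theorem~\ref{th:1}), in each regime first pinning down the winner among the finite candidate list and then invoking stability plus a finite case check to exclude everything else.
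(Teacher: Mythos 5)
Your overall architecture (exact values for the candidates $T_2(n)$, $K_n$, $K_n^-$, $K_n^=$, monotonicity in $\alpha$, stability, then exclusion of everything else) matches the paper's in outline, and your observation that the cost of a decomposition changes by $\alpha-6$ per triangle is the right starting point: for $\alpha<6$ it yields the exact identity $\pi_3^\alpha(G)=2e(G)-(6-\alpha)\nu(G)$, not merely two-sided inequalities. But there is a genuine gap at the step you describe as ``stability plus a finite case check to exclude everything else''. Being $\delta n^2$-close to $K_n$ does not reduce matters to ``at-most-$O(1)$ edge discrepancies handled by hand'': such a graph can still differ from $K_n$ in $\Theta(n^2)$ edges, and the reduction from $\delta n^2$-close to the exact extremal list is precisely the content of Lemmas~\ref{lm:KnDeg} and~\ref{lm:Kn} --- which are proved only for $\alpha=3$. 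A $\pi_3^\alpha$-extremal graph for $\alpha\ne3$ need not be $\pi_3^3$-extremal, so you cannot quote those lemmas directly; you must either redo them for general $\alpha$ or supply a transfer mechanism. The paper's mechanism for $3<\alpha<4$ is: from $\pi_3^\alpha(G)\ge\pi_3^\alpha(K_n)$, $\nu(G)\le\nu(K_n)$ and \eqref{eq:mon} deduce $\pi_3^3(G)\ge\pi_3^3(K_n)$, so that Lemma~\ref{lm:Kn} applies to $G$; then, if $G\notin\C E'_n$, one has $\pi_3^3(G)<\pi_3^3(K_n^=)$, and \eqref{eq:mon} forces $\nu(K_n^=)<\nu(G)\le\nu(K_n)$, i.e.\ $3\nu(G)=\binom{n}{2}-3$, which leaves only $K_n^-$ (and two degenerate variants) as candidates, ruled out by $\pi_3^\alpha(K_n^=)>\pi_3^\alpha(K_n^-)$ for $\alpha<4$. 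Nothing in your plan produces this pinning-down of $\nu(G)$, and without it the exclusion step does not go through.

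The second gap concerns $4\le\alpha<6$, in particular the exact three-way tie at $\alpha=4$, $n\equiv1,3\pmod 6$. The paper handles this regime by a self-contained exchange argument with no stability at all: if $xy\notin E(G)$, an optimal decomposition of $G+xy$ either uses $xy$ as a $K_2$ (then deleting it beats $G$) or in a triangle $xyz$ (then replacing that triangle by the two edges $xz$, $yz$ changes the cost by $4-\alpha\le0$); hence $\pi_3^\alpha(G)\le\pi_3^\alpha(G+xy)$, strictly unless $\alpha=4$. Iterating up to $K_n$ and then reversing the process, a mod-$3$ and parity count of the forced $K_2$'s identifies exactly which one-, two- and three-edge deletions preserve optimality. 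Your intuition that at $\alpha=4$ ``removing edges to make the complement triangle-divisible is free'' is correct, but it is not yet a proof that only a matching of size at most two may be removed; the verification that deleting a cherry, or a matching of size three, strictly loses (and that a single deleted edge survives only when $K_n$ itself is triangle-divisible, i.e.\ $n\equiv1,3\pmod 6$) needs to be made explicit. As written, the proposal is a plausible plan whose two hardest steps are asserted rather than carried out.
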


\medskip 
 This paper is organised as follows. In Section~\ref{sec:T3} we give an outline of the proof of Theorem~\ref{thm:klmp} from~\cite{KralLidickyMartinsPehova19} that we build on.
 Theorem~\ref{th:1} is proved in Section~\ref{sec:proof}.
 Extension for other weights of triangles is in Section~\ref{sec:alpha}.
Some related results are mentioned in Section~\ref{sec:related}.

\textbf{Notation.} We follow standard graph theory notation (see e.g.\ \cite{Bollobas:mgt}). 

For a graph $G$, we denote the set neighbours of $x\in V(G)$ by $\Gamma_{G}(x)$ (or just $\Gamma(x)$ when $G$ is understood) and the number of edges in a set $B\subseteq E(G)$ incident with $x$ by $d_B(x)$. We denote by $K[V_1,V_2]$ the complete bipartite graph with vertex partition $(V_1,V_2)$. The term $[X,Y]$-edges refers to edges $xy\in E(G)$ such that $x\in X$ and $y\in Y$. We write $[x,Y]$-edges as a short-hand for $[\{x\},Y]$-edges. 

Let $t_2(n):=|E(T_2(n))|$ be the number of edges in the Tur\'an graph~$T_2(n)$. 
Recall that $t_2(n)=\lfloor n^2/4 \rfloor$. By a  \emph{cherry} we mean a path with 2 edges.

\renewcommand{\cong}{=}
We consider graphs up to isomorphism; in particular, we write $G\cong H$ to denote that $G$ and $H$ are isomorphic graphs.

\section{Outline of the proof of Theorem~\ref{thm:klmp} from~\cite{KralLidickyMartinsPehova19}}\label{sec:T3}

In this section we give a short outline of the proof of \cite[Lemma 5]{KralLidickyMartinsPehova19}, which was a key step in proving
$\pi_3(n)\le n^2/2+o(n^2)$ and is a
starting point of our argument towards Theorem \ref{th:1}. For an $n$-vertex graph $G$ and each $i\in\mathbb{N}$, let $K_i(G)$ be the set of all $i$-cliques in $G$. Let $\pi_{3,f}(G)$ be the minimum of 
$$
 2\sum_{xy\in K_2(G)} c(xy)+3 \sum_{xyz\in K_3(G)}c(xyz)
 $$ over \emph{fractional $\{K_2,K_3\}$-decompositions} $c$ of $E(G)$, that is, over maps $c:K_2(G)\cup K_3(G)\to [0,1]$ such that for every edge $xy\in E(G)$ we have $c(xy)+\sum_{z:xyz\in K_3(G)} c(xyz)\ge 1$. Of course, $\pi_{3,f}(G)\le \pi_3(G)$. By a result of Haxell and R\"{o}dl \cite{HaxellRodl01} or a more general version by Yuster~\cite{Yuster05}, it also holds that $\pi_3(G)\le \pi_{3,f}(G)+o(n^2)$. So, to show that $\pi_3(G)\leq n^2/2+o(n^2)$, it suffices to consider the fractional equivalent $\pi_{3,f}(G)$.
 
\begin{lemma}\label{lem:flag}
Let $G$ be an $n$-vertex graph. Then
\[\binom{n}{7}^{-1}\sum_{W\in \binom{V(G)}{7}}\pi_{3,f}(G[W])\leq 21+o(1)\]
where the sum is taken over 7-vertex subsets $W$ of~$V(G)$.
\end{lemma}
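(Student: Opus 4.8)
Our plan is to follow the flag-algebra argument of Kr\'al', Lidick\'y, Martins and Pehova~\cite{KralLidickyMartinsPehova19}. The first step is to rewrite the averaged quantity in terms of subgraph densities. Since $\pi_{3,f}(G[W])$ depends only on the isomorphism type of $G[W]$, if we let $\C H_7$ be the (finite) set of graphs on $7$ vertices up to isomorphism and $d(H;G)$ the probability that a uniformly random $7$-subset of $V(G)$ induces a copy of $H$, then
\[
\binom n7^{-1}\sum_{W\in\binom{V(G)}{7}}\pi_{3,f}(G[W]) \;=\; \sum_{H\in\C H_7}\pi_{3,f}(H)\,d(H;G),
\]
so it suffices to bound the right-hand side by $21+o(1)$ for every $n$-vertex graph $G$, the $o(1)$ being the usual error incurred when passing between a finite graph and the flag-algebra limit.

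Second, the coefficients $\pi_{3,f}(H)$ are made explicit. For each $H\in\C H_7$ the value $\pi_{3,f}(H)$ is the optimum of an explicit finite linear program with rational data, hence a computable rational number; equivalently, since raising any $c(\cdot)$ only increases the cost, the constraint $c(\cdot)\le 1$ is never binding at the minimum and LP duality gives $\pi_{3,f}(H)=\max\sum_{e\in E(H)}y_e$ over weightings $y\colon E(H)\to\mathbb{R}_{\ge 0}$ with $y_e\le 2$ for every edge and $y_{xy}+y_{yz}+y_{xz}\le 3$ for every triangle $xyz$ of $H$. A short check records in particular $\pi_{3,f}(K_7)=21$ (the Fano plane decomposes $K_7$ into seven triangles, and the dual bound $5\sum_e y_e=\sum_{T}\sum_{e\in T}y_e\le 3\cdot 35$ is tight) and $\pi_{3,f}(T_2(7))=24$; thus individual $7$-vertex graphs can exceed $21$, and the lemma is genuinely an averaging statement.

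Third, one certifies $\sum_{H\in\C H_7}\pi_{3,f}(H)\,d(H;G)\le 21+o(1)$ by the semidefinite-programming method. One fixes a suitable finite list of types $\sigma$ (small labelled graphs) together with the associated flags, runs an SDP solver to obtain positive semidefinite matrices $M_\sigma$, and applies the Cauchy--Schwarz/averaging inequality of the flag-algebra calculus to deduce, for every large $G$,
\[
\sum_{H\in\C H_7}\pi_{3,f}(H)\,d(H;G)\;\le\;21\;-\;\sum_\sigma\big\langle M_\sigma,\,Q_\sigma(G)\big\rangle\;+\;o(1)\;\le\;21+o(1),
\]
where $Q_\sigma(G)$ is the positive semidefinite Gram-type matrix of averaged flag products for the type $\sigma$ and $\langle\cdot,\cdot\rangle$ is the trace inner product. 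The numerical solution is then rounded to exact rationals and the resulting polynomial inequality among subgraph densities is verified symbolically, turning the computation into a rigorous proof.

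The main obstacle is the size and the tightness of this computation rather than any conceptual difficulty. It involves on the order of a thousand $7$-vertex graphs, an exact linear program for each, and a non-trivial SDP; more importantly, the bound is sharp, since both $K_n$ (every $7$-subset induces $K_7$, contributing $21$) and $T_2(n)$ (a random $7$-subset splitting as $a$ to $7-a$ contributes $2a(7-a)$, and $\mathbb E[2a(7-a)]=21$ for $a\sim\mathrm{Bin}(7,\tfrac12)$) drive the right-hand side to exactly $21$. Hence the SDP has no slack, and the rounding of the $M_\sigma$ must be carried out carefully so as to preserve positive semidefiniteness while still certifying the value $21$ exactly; extracting an exact certificate near such a tight, non-unique optimum is the delicate point. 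This was done in~\cite{KralLidickyMartinsPehova19}, and we invoke it. (The finer information about which graphs $G$ come close to equality, needed later for Theorems~\ref{th:1} and~\ref{thm:alpha}, will require a stability strengthening of this computation, but for Lemma~\ref{lem:flag} the bound $21+o(1)$ is all that is used.)
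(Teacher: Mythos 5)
Your proposal is correct and takes essentially the same route as the paper: both reduce the averaged quantity to a linear combination $\sum_H \pi_{3,f}(H)\,p(H,G)$ of $7$-vertex densities and then invoke the exact SDP certificate computed in~\cite{KralLidickyMartinsPehova19} (the paper's outline just records the specific matrix $M$ and flag vector $\vec F$ explicitly, which it needs later for the stability argument in Lemma~\ref{lm:stab}, but not for the bound $21+o(1)$ itself). Your side remarks on tightness ($\pi_{3,f}(K_7)=21$, $\mathbb{E}[2a(7-a)]=21$ for $T_2(n)$) are accurate and correctly identify why the certificate has no slack.
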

\begin{proof}[Outline of proof.]
Let $M$ be the following positive semi-definite matrix
\begin{center}
$M:= \frac{1}{12\cdot 10^{9}}$
\scalebox{0.65}{
$\begin{pmatrix}
 1800000000 & 2444365956 &  640188285 & -1524146769 & 1386815580 & -732139362  & -129387078 \\
 2444365956 & 4759879134 & 1177441152 & -1783771230 & 2546923788 & -1397639394 & -143552208 \\
  640188285  & 1177441152 &  484273772 & -317303211&  1038156300  & -591902130   & -6783162 \\
-1524146769 & -1783771230 & -317303211 & 1558870290 & -651906630  & 305728704  & 154602378 \\
 1386815580 & 2546923788 & 1038156300 & -651906630 &  2285399634 & -1283125950  & -10755036 \\
 -732139362 & -1397639394 & -591902130 &  305728704 & -1283125950  & 734039016   & -1621938 \\
 -129387078  & -143552208  &  -6783162  & 154602378  & -10755036   & -1621938  &  23860164  
\end{pmatrix}$} $\succeq 0$
\end{center}
and let $\overrightarrow F:=(F_1,\dots,F_7)$ be the following vector of rooted graphs, each having 4 vertices with the root denoted by the white square:
\begin{align*}
\vec F = \left(
\vc{
{\begin{tikzpicture}\outercycle{5}{4}
\draw[edge_color1] (x0)--(x1);\draw[edge_color1] (x0)--(x2);\draw[edge_color1] (x0)--(x3);  \draw[edge_color1] (x1)--(x2);\draw[edge_color1] (x1)--(x3);  \draw[edge_color1] (x2)--(x3);    
\draw (x0) node[labeled_vertex]{};\draw (x1) node[unlabeled_vertex]{};\draw (x2) node[unlabeled_vertex]{};\draw (x3) node[unlabeled_vertex]{};
\end{tikzpicture}}
,
{\begin{tikzpicture}\outercycle{5}{4}
\draw[edge_color1] (x0)--(x1);\draw[edge_color1] (x0)--(x2);\draw[edge_color1] (x0)--(x3);  \draw[edge_color1] (x1)--(x2);\draw[edge_color1] (x1)--(x3);  \draw[edge_color2] (x2)--(x3);    
\draw (x0) node[labeled_vertex]{};\draw (x1) node[unlabeled_vertex]{};\draw (x2) node[unlabeled_vertex]{};\draw (x3) node[unlabeled_vertex]{};
\end{tikzpicture}}
,
{\begin{tikzpicture}\outercycle{5}{4}
\draw[edge_color1] (x0)--(x1);\draw[edge_color1] (x0)--(x2);\draw[edge_color2] (x0)--(x3);  \draw[edge_color1] (x1)--(x2);\draw[edge_color2] (x1)--(x3);  \draw[edge_color2] (x2)--(x3);    
\draw (x0) node[labeled_vertex]{};\draw (x1) node[unlabeled_vertex]{};\draw (x2) node[unlabeled_vertex]{};\draw (x3) node[unlabeled_vertex]{};
\end{tikzpicture}}
,
{\begin{tikzpicture}\outercycle{5}{4}
\draw[edge_color2] (x0)--(x1);\draw[edge_color2] (x0)--(x2);\draw[edge_color2] (x0)--(x3);  \draw[edge_color1] (x1)--(x2);\draw[edge_color1] (x1)--(x3);  \draw[edge_color1] (x2)--(x3);    
\draw (x0) node[labeled_vertex]{};\draw (x1) node[unlabeled_vertex]{};\draw (x2) node[unlabeled_vertex]{};\draw (x3) node[unlabeled_vertex]{};
\end{tikzpicture}}
,
{\begin{tikzpicture}\outercycle{5}{4}
\draw[edge_color1] (x0)--(x1);\draw[edge_color2] (x0)--(x2);\draw[edge_color2] (x0)--(x3);  \draw[edge_color1] (x1)--(x2);\draw[edge_color2] (x1)--(x3);  \draw[edge_color2] (x2)--(x3);    
\draw (x0) node[labeled_vertex]{};\draw (x1) node[unlabeled_vertex]{};\draw (x2) node[unlabeled_vertex]{};\draw (x3) node[unlabeled_vertex]{};
\end{tikzpicture}}
,
{\begin{tikzpicture}\outercycle{5}{4}
\draw[edge_color1] (x0)--(x1);\draw[edge_color2] (x0)--(x2);\draw[edge_color2] (x0)--(x3);  \draw[edge_color2] (x1)--(x2);\draw[edge_color2] (x1)--(x3);  \draw[edge_color1] (x2)--(x3);    
\draw (x0) node[labeled_vertex]{};\draw (x1) node[unlabeled_vertex]{};\draw (x2) node[unlabeled_vertex]{};\draw (x3) node[unlabeled_vertex]{};
\end{tikzpicture}}
,
{\begin{tikzpicture}\outercycle{5}{4}
\draw[edge_color1] (x0)--(x1);\draw[edge_color2] (x0)--(x2);\draw[edge_color2] (x0)--(x3);  \draw[edge_color2] (x1)--(x2);\draw[edge_color2] (x1)--(x3);  \draw[edge_color2] (x2)--(x3);    
\draw (x0) node[labeled_vertex]{};\draw (x1) node[unlabeled_vertex]{};\draw (x2) node[unlabeled_vertex]{};\draw (x3) node[unlabeled_vertex]{};
\end{tikzpicture}}
}
\right).
\end{align*}

Take any graph $G$ of order $n\to\infty$. For $w\in V(G)$, let $\V v_{G,w}\in\I R^7$ denote the column vector whose $i$-th component is $p(F_i,(G,w))$,
the density of the $1$-flag $F_i$ in the rooted graph $(G,w)$, which is $G$ with the vertex $w$ designated as the root.

It was shown in~\cite{KralLidickyMartinsPehova19} that
\beq{eq:1}
\frac1{{n\choose 7}}\sum_{W\in {V(G)\choose 7}} \pi_{3,f}(G[W])+\frac1n \sum_{w\in V(G)} \V v_{G,w}^T M\V v_{G,w}
\le 21
+o(1).
\eeq
 Namely, if we re-write the left-hand size as a linear combination $\sum_H c_H p(H,G)$, where $H$ ranges over all $7$-vertex unlabelled graphs and $p(H,G)$ is the density of $H$ in $G$, then each
coefficient $c_H$ is at most $21$. Since $\sum_H p(H,G)=1$, the claimed inequality~\eqref{eq:1} follows.

In particular, since $M$ is positive semi-definite, the quantity $\frac1n \sum_{w\in V(G)} \V v_{G,w}^T M\V v_{G,w}$ is always non-negative, yielding the result.
\end{proof}

The main result of \cite{KralLidickyMartinsPehova19} that $\pi_3(n)\le n^2/2+o(n^2)$ now follows directly from Lemma \ref{lem:flag}.

\begin{proof}[Proof of Theorem~\ref{thm:klmp}.]
Let $G$ be any graph of order $n\to\infty$. As mentioned before, $\pi_3(G)\leq \pi_{3,f}(G)+o(n^2)$. Also, we have
$$\binom{n}{2}^{-1}\pi_{3,f}(G)\le \binom{7}{2}^{-1}\binom{n}{7}^{-1}\sum_{W\in {V(G)\choose 7}} \pi_{3,f}(G[W]),$$ 
by averaging optimal fractional decompositions of all 7-vertex induced subgraphs. Combining this inequality with Lemma \ref{lem:flag} immediately gives that $\pi_3(G)\le (1/2+o(1))n^2$.
\end{proof}

\section{Proof of Theorem~\ref{th:1}}\label{sec:proof}

We use the so-called \emph{stability approach}, where the first step is to describe the approximate structure of all almost $\pi_3$-extremal graphs of order $n\to\infty$ within $o(n^2)$ adjacencies. Namely, our Corollary~\ref{co:stab}
will show that every such graph is close to $K_n$ or~$T_2(n)$. 

For this purpose, we start by showing that all almost $\pi_3$-extremal graphs contain almost no copies of the three graphs in Figure \ref{fig:H} (which are obtained by taking the unlabelled versions of the corresponding graphs in $\overrightarrow{F}$). This is achieved by the following lemma that builds on the results from~\cite{KralLidickyMartinsPehova19}.

\begin{figure}[H]
\begin{center}
\vc{
{\begin{tikzpicture}\outercycle{5}{4}
\draw[edge_color1] (x0)--(x1);\draw[edge_color1] (x0)--(x2);\draw[edge_color1] (x0)--(x3);  \draw[edge_color1] (x1)--(x2);\draw[edge_color1] (x1)--(x3);  \draw[edge_color2] (x2)--(x3);    
\draw (x0) node[unlabeled_vertex]{};\draw (x1) node[unlabeled_vertex]{};\draw (x2) node[unlabeled_vertex]{};\draw (x3) node[unlabeled_vertex]{};
\draw (0,-1) node {$H_2$};
\end{tikzpicture}}}
\hskip 3em
\vc{\begin{tikzpicture}\outercycle{5}{4}
\draw[edge_color1] (x0)--(x1);\draw[edge_color2] (x0)--(x2);\draw[edge_color2] (x0)--(x3);  \draw[edge_color1] (x1)--(x2);\draw[edge_color2] (x1)--(x3);  \draw[edge_color2] (x2)--(x3);    
\draw (x0) node[unlabeled_vertex]{};\draw (x1) node[unlabeled_vertex]{};\draw (x2) node[unlabeled_vertex]{};\draw (x3) node[unlabeled_vertex]{};
\draw (0,-1) node {$H_5$};
\end{tikzpicture}}
\hskip 3em
\vc{\begin{tikzpicture}\outercycle{5}{4}
\draw[edge_color1] (x0)--(x1);\draw[edge_color2] (x0)--(x2);\draw[edge_color2] (x0)--(x3);  \draw[edge_color2] (x1)--(x2);\draw[edge_color2] (x1)--(x3);  \draw[edge_color2] (x2)--(x3);    
\draw (x0) node[unlabeled_vertex]{};\draw (x1) node[unlabeled_vertex]{};\draw (x2) node[unlabeled_vertex]{};\draw (x3) node[unlabeled_vertex]{};
\draw (0,-1) node {$H_7$};
\end{tikzpicture}}
\end{center}
\caption{Graphs $H_2$, $H_5$, and $H_7$.}\label{fig:H}
\end{figure}
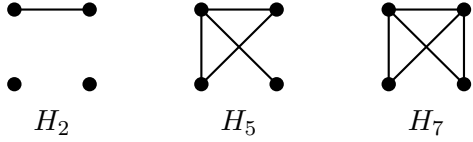

\begin{lemma}\label{lm:stab} For every $c>0$ there exist $\e>0$ and $n_0\in \mathbb{N}$ such that for all $n\ge n_0$, if $G$ is a graph of order $n$ with $\pi_3(G)\ge(1/2-\e)n^2$, then $G$ has at most $c \binom{n}{4}$ copies of each of the graphs $H_2:=(\{a,b,c,d\},\{ab\})$, $H_5:=(\{a,b,c,d\},\{ab,bc,ac,ad\})$ and $H_7:=(\{a,b,c,d\},\{ab,bc,ac,bd,ad\})$ from Figure~\ref{fig:H}.
\end{lemma}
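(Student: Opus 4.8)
The plan is to derive this from Lemma~\ref{lem:flag} by a clean extremal/stability argument. The point is that the unlabelled graphs $H_2$, $H_5$, $H_7$ are precisely the three rooted graphs $F_i$ among $F_1,\dots,F_7$ in which the root is incident to exactly one ``red'' edge (the one indicated by \texttt{edge\_color1}); in the notation of the outline, the red edges are the ``real'' edges of $G$ and the black edges are non-edges, so $F_2, F_5, F_7$ are the rooted versions of $H_2, H_5, H_7$. Thus if $G$ contains many copies of some $H_j$ ($j\in\{2,5,7\}$), then for a positive proportion of roots $w$ the corresponding coordinate $p(F_j,(G,w))$ of $\V v_{G,w}$ is bounded away from $0$. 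First I would make this quantitative: a routine counting argument shows that if $G$ has at least $c\binom n4$ copies of $H_j$, then $\sum_{w}p(F_j,(G,w)) \ge c' n$ for some $c' = c'(c)>0$, and in fact $\sum_w p(F_j,(G,w))^2 \ge c'' n$ by convexity, since the number of rooted copies of $F_j$ at a fixed $w$ is at most $\binom{n-1}{3}$.

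The heart of the argument is then to show that the positive semi-definite form $\V v^T M\V v$ is in fact \emph{positive definite when restricted to the relevant coordinates}, or more precisely that $\V v^T M \V v \ge \delta\,(v_2^2+v_5^2+v_7^2)$ for some $\delta>0$ and all $\V v$ lying in the (closed, convex) set of vectors that arise as limits of $\V v_{G,w}$. This is a compactness statement: the set $\mathcal K\subseteq\mathbb R^7$ of achievable flag-density vectors is closed and bounded, $\V v^T M\V v$ is continuous and (by $M\succeq 0$) nonnegative on it, and one only needs to check that the kernel of $M$ meets $\mathcal K$ only in vectors with $v_2=v_5=v_7=0$ — which, since $M$ is an explicit rational matrix, is a finite linear-algebra computation (compute $\ker M$ and verify every kernel vector has these three coordinates zero). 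Granting this, $\tfrac1n\sum_w \V v_{G,w}^T M \V v_{G,w} \ge \tfrac{\delta}{n}\sum_w(p(F_2,(G,w))^2 + p(F_5,(G,w))^2 + p(F_7,(G,w))^2)$, which by the previous paragraph is at least a positive constant whenever $G$ has $\ge c\binom n4$ copies of one of $H_2,H_5,H_7$.

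Combining this with inequality~\eqref{eq:1} in the form
\[
\frac1{\binom n7}\sum_{W\in\binom{V(G)}{7}}\pi_{3,f}(G[W]) \le 21 - \frac1n\sum_{w\in V(G)}\V v_{G,w}^T M\V v_{G,w} + o(1),
\]
a constant number of copies of any $H_j$ forces the average of $\pi_{3,f}(G[W])$ to drop below $21 - \eta$ for some fixed $\eta>0$. By the averaging step used in the proof of Theorem~\ref{thm:klmp} (namely $\binom n2^{-1}\pi_{3,f}(G)\le \binom 72^{-1}\binom n7^{-1}\sum_W\pi_{3,f}(G[W])$) together with $\pi_3(G)\le\pi_{3,f}(G)+o(n^2)$, this yields $\pi_3(G)\le (1/2 - \eta')n^2$ for some fixed $\eta'>0$ and all large $n$ — contradicting the hypothesis $\pi_3(G)\ge(1/2-\e)n^2$ once $\e<\eta'$. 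Running this contrapositively and choosing $\e$ and $n_0$ appropriately gives the lemma.

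The main obstacle I anticipate is the positive-definiteness-on-$\mathcal K$ step: one must verify that no nonzero vector in $\ker M$ is a genuine limit of flag-density profiles with a nonzero $H_j$-coordinate. The safe route is the weaker (and sufficient) purely algebraic claim that $\ker M \subseteq \{v : v_2=v_5=v_7=0\}$, which reduces $\V v^T M\V v \ge \delta(v_2^2+v_5^2+v_7^2)$ to diagonalizing $M$; this avoids any description of $\mathcal K$ but requires the explicit $7\times7$ rational computation. A secondary point needing care is the translation between ``many copies of the unlabelled $H_j$ in $G$'' and ``$\sum_w p(F_j,(G,w))^2$ large'', where one should be careful about automorphisms of $H_j$ and about which vertex plays the role of the root, but this is standard flag-algebra bookkeeping.
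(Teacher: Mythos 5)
Your proposal is correct and follows essentially the same route as the paper: identify $H_2,H_5,H_7$ as the unlabelled versions of $F_2,F_5,F_7$, observe that many copies force many roots $w$ with the corresponding coordinate of $\V v_{G,w}$ bounded away from $0$, verify that $\ker M$ (which is one-dimensional, spanned by $(1,0,3,1,0,3,0)$) vanishes in coordinates $2,5,7$ so that $\V u^T M\V u\ge\lambda_2\cdot(\text{projection onto }(\ker M)^\perp)^2$ is bounded below by a positive constant for such roots, and derive a contradiction with the $o(n)$ bound on $\sum_w \V v_{G,w}^T M\V v_{G,w}$ coming from near-optimality and inequality~\eqref{eq:1}. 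The only slip is your description of the edge colours (the visible edges are the real edges of $G$, not the invisible ones), but this does not affect the argument, and the purely algebraic kernel check you flag as the "safe route" is exactly what the paper carries out.
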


\begin{proof} Given $c>0$, let $\e\gg 1/n_0>0$ be sufficiently small. Let $G$ be a graph as in the lemma. Let $M$ and $\overrightarrow{F}$ be as in the proof of Lemma \ref{lem:flag}.

First, the rank of the matrix $M$ is 6 with $\V v=(1,0,3,1,0,3,0)$ being the only zero eigenvector. (Thus all others eigenvalues of $M$ are strictly positive by~$M\succeq 0$.) 

Second, by the almost optimality of $G$ and the fact that each term in the left-hand side of~\eqref{eq:1} is non-negative, we have that
 \beq{eq:2}
 \sum_{w\in V(G)} \V v_{G,w}^T M\V v_{G,w}=o_\e(n).
 \eeq
 
We now show that $G$ must contain few copies of the graphs $H_2$, $H_5$ and $H_7$. Suppose, for contradiction, that $G$ contains at least $c\binom{n}{4}$ copies of $H_2$. Then, by a simple double-counting argument we have that at least $cn/4$ vertices in $G$ contain at least $c\binom{n}{3}/4$ copies of the rooted flag~$F_2$. In particular, the second coordinate of at least $cn/4$ of the vectors $\V v_{G,w}$ is at least $c/4$. For each such vector $\V u$, let $\V u':=\V u/\|\V u\|_2$ be the scalar multiple of $\V u$ of $\ell^2$-norm 1. Since $\|\V u\|_2\le \sqrt 7$, we have that its second coordinate $\V u'_2$ is at least $c/4\sqrt 7$. The scalar product of $\V u'$ and the $\ell^2$-normalised zero eigenvector $\V v/\sqrt{20}$ (whose second coordinate is 0) is at most $\sqrt{1-(c/4\sqrt{7})^2}$. Thus  the projection of $\V u$ on the orthogonal complement $L=\V v^\perp$ of the zero eigenspace of $M$ has $\ell^2$-norm at least $c/4\sqrt{7}$. Thus $\V u^T M\V u\ge \lambda_2 (c/4\sqrt{7})^2$, where $\lambda_2>0$ is the smallest positive eigenvalue of $M$ (in fact, one can check with computer that $\lambda_2=0.0005228...$). Thus, we have that the left-hand side of \eqref{eq:2} in which each term is non-negative by $M\succeq 0$ is at least $(cn/4) \times \lambda_2(c/4\sqrt{7})^2=\Omega(n)$, a contradiction. 

The analogous argument shows that the densities of $H_5$ and $H_7$ in $G$ are also at most~$c$.
\end{proof}

Let us say that two graphs $G_1$ and $G_2$ of the same order are \emph{$k$-close in the edit distance} (or simply \emph{$k$-close}) if there is a relabelling of the vertices of $G_2$ so that $|E(G_1)\triangle E(G_2)|\leq k$. In other words, we can make $G_1$ and $G_2$ isomorphic by changing at most $k$ adjacencies.

\begin{corollary}\label{co:stab} For every $\delta>0$ there exists $n_1\in \mathbb{N}$ such that if $G$ is a graph of order $n\ge n_1$ with $\pi_3(G)\ge \ell(n)-n^2/n_1$, then $G$ 
	is $\delta n^2$-close in edit distance to $K_n$ or to $T_2(n)$.
\end{corollary}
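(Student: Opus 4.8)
The plan is to deduce the approximate structure from Lemma~\ref{lm:stab} using a removal/regularity-type argument together with a direct analysis of which $n$-vertex graphs can have $\pi_3$ close to $\ell(n)\approx n^2/2$. First I would fix $\delta>0$ and choose $c=c(\delta)>0$ small, then apply Lemma~\ref{lm:stab} with this $c$ to obtain $\varepsilon>0$; setting $n_1$ large (so that $n^2/n_1<\varepsilon n^2$ and $\ell(n)-n^2/n_1\ge(1/2-\varepsilon)n^2$) guarantees that any $G$ as in the hypothesis has at most $c\binom{n}{4}$ copies of each of $H_2$, $H_5$, $H_7$. The heart of the matter is then: \emph{an $n$-vertex graph with $o(n^4)$ copies of $H_2$, $H_5$, $H_7$ is $o(n^2)$-close to $K_n$ or to $T_2(n)$}. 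Since $H_2=(\{a,b,c,d\},\{ab\})$ having few copies means almost every $4$-set spans an edge — equivalently $\overline G$ has $o(n^4)$ copies of $K_2$ plus isolated-triple, i.e. $\overline G$ has $o(n^4)$ "$P_1+3K_1$"; by a standard supersaturation/counting argument this forces $\overline G$ to have $o(n^2)$ edges \emph{or} $\overline G$ to be within $o(n^2)$ edges of having all edges inside a linear-sized clique of $G$... wait, more carefully: few copies of $H_2$ says $G$ is either almost complete or almost every vertex has small non-degree; the clean way is that $\overline G$ contains $o(n^4)$ induced copies of ($K_2$ + $3$ isolated vertices), which by removal forces $\overline G$ to be $o(n^2)$-close to a graph in which every $4$-set containing an edge is "saturated" — and the only such dense possibilities are a clique (giving $G$ close to complete bipartite) or emptiness (giving $G$ close to $K_n$).

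More concretely, I would argue as follows. The condition on $H_7=(\{a,b,c,d\},\{ab,bc,ac,bd,ad\})$ — a $K_4$ minus one edge — being rare, combined with $H_5=(\{a,b,c,d\},\{ab,bc,ac,ad\})$ ($K_4$ minus two incident edges, i.e. a triangle with a pendant edge) being rare, should pin down the local structure around each vertex: for all but $o(n)$ vertices $w$, the neighbourhood $\Gamma(w)$ and the non-neighbourhood $V\setminus\Gamma(w)$ behave in one of two rigid ways. Specifically, scarcity of $H_5$ says a triangle cannot have many common-neighbour-free pendant extensions, and scarcity of $H_7$ says an edge in a triangle cannot be missing in many $4$-sets; feeding these into a double-counting over $4$-sets (as in the proof of Lemma~\ref{lm:stab}) gives that $G$ is $o(n^2)$-close to a graph $G'$ every $4$-vertex subset of which induces one of: the empty graph, a single edge, a perfect matching $2K_2$, $K_4$ minus a perfect matching, or $K_4$. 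A short case analysis of "hereditary" families closed under these $4$-vertex types shows the only two extremal-density members are $\overline{K_n}$'s complement $K_n$ and $T_2(n)$ (any other configuration would create one of the forbidden $H_i$ in a positive fraction of $4$-sets, or would be sparse, contradicting $\pi_3(G)\ge(1/2-\varepsilon)n^2$ because a graph that is $o(n^2)$-close to one with $o(n^2)$ edges has $\pi_3=o(n^2)$).

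The main obstacle I anticipate is making the passage from "$o(n^4)$ copies of $H_2,H_5,H_7$" to "$o(n^2)$-close to $K_n$ or $T_2(n)$" fully rigorous and quantitatively clean — this is exactly a graph-removal-lemma step, and one must either invoke the (induced) removal lemma or, preferably here, give a direct elementary argument exploiting that only three small graphs are forbidden and that the target structures ($K_n$, $T_2(n)$) are so rigid. A secondary point to handle carefully is ruling out the "sparse" alternative: a priori the forbidden-subgraph conditions are also satisfied by very sparse graphs, so one genuinely needs the hypothesis $\pi_3(G)\ge\ell(n)-n^2/n_1$ (via the trivial bound $\pi_3(G)\le 2|E(G)|+|V(G)|$, since one may decompose into single edges, together with the fact that being edit-close to a sparse graph keeps $\pi_3$ small) to discard it. Once the structural dichotomy $G'\in\{K_n\text{-like},T_2(n)\text{-like}\}$ is established and the sparse case excluded, the corollary follows immediately.
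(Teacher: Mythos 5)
Your high-level strategy is the paper's: apply Lemma~\ref{lm:stab}, pass via the induced removal lemma to a graph $G'$ that is genuinely $\{H_2,H_5,H_7\}$-free, and then pin down its structure using the near-extremality of $\pi_3$. But the structural step, which you correctly identify as the heart of the matter, is not carried out correctly and hides genuine gaps. Your list of admissible $4$-vertex induced subgraphs is wrong: it contains ``a single edge'', which \emph{is} the forbidden graph $H_2$, and it omits $K_{1,3}$, which is induced by every $3$--$1$ split of $T_2(n)$ --- so $T_2(n)$ itself would fail your classification. The clean statement (and the paper's) is a dichotomy: if $G'$ contains a triangle $abc$, then $H_5$- and $H_7$-freeness force every other vertex to see $0$ or $3$ of $\{a,b,c\}$; the set $A_3$ of vertices seeing all three is a clique by $H_7$-freeness, the set $A_0$ seeing none is a clique by $H_2$-freeness, and there are no $[A_0,A_3]$-edges by $H_5$-freeness, so $G'$ is a disjoint union of at most two cliques. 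Otherwise $G'$ is triangle-free.

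Moreover, your mechanism for discarding the non-extremal structures does not work. In the triangle-free branch, ``dense and triangle-free'' does not by itself give edit-closeness to $T_2(n)$; you need the Erd\H{o}s--Simonovits stability theorem for Mantel's theorem, which you never invoke. More seriously, in the other branch the balanced union of two cliques $K_{\lfloor n/2\rfloor}+K_{\lceil n/2\rceil}$ is $\{H_2,H_5,H_7\}$-free, has about $n^2/4$ edges (so it passes your ``not sparse'' filter, since $2e(G)\approx n^2/2\approx\ell(n)$), and yet is $\Theta(n^2)$-far from both $K_n$ and $T_2(n)$. The trivial bound $\pi_3\le 2e(G)$ cannot exclude it; what excludes it is that a large clique admits a near-perfect triangle decomposition (Theorem~\ref{thm:decomp}), so $\pi_3$ of a union of two large cliques is only $e(G')+O(n)\approx n^2/4$, far below $\ell(n)$. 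This computation, which forces one of the two cliques to have size $O(\delta n)$ and hence $G'$ to be close to $K_n$, is the missing ingredient.
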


\begin{proof} Given any $\delta>0$, choose sufficiently small constants $\delta \gg c\gg 1/n_1>0$. Take any graph $G$ on $n\ge n_1$ vertices such that $\pi_3(G)\ge \ell(n)-n^2/n_1$.

By Lemma~\ref{lm:stab} and the Induced Removal Lemma~\cite{AlonFischerKrivelevichSzegedy00}, $G$ can be made $\{H_2,H_5,H_7\}$-free by changing at most $c n^2$ adjacencies. Denote this new graph by $G'$ and note that 
$\pi_3(G')\ge \pi_3(G) - 2c n^2$. By $c\ll \delta$, it is enough to show that $G'$ is $\delta n^2/2$-close to $K_n$ or~$T_2(n)$.
	
Let us show that $G'$ is either triangle-free, or the disjoint union of at most two cliques. Indeed, if some vertices $a,b,c$ span a triangle in $G'$ then, by the $\{H_5,H_7\}$-freeness of $G$, all the remaining vertices of $G'$ have either no or three neighbours among $\{a,b,c\}$. Let $A_0$ be the set of vertices in $G'\backslash \{a,b,c\}$ which see none of $\{a,b,c\}$, and let $A_3$ be the set of vertices which see all of $\{a,b,c\}$. Then $A_3$ is a clique because $G'$ is $H_7$-free. The set $A_0$ is also a clique because $G'$ is $H_2$-free. Also, no pair $xy$ in $A_3\times A_0$ can be an edge as otherwise e.g.\ the 4-set $\{a,b,x,y\}$ spans a copy of $H_5$ in $G$. It follows that $G$ is the disjoint union of the cliques on $A_0$ and $A_3\cup\{a,b,c\}$, as required.

Now, if $G'$ is triangle-free, then $e(G')=\pi_3(G')/2\ge \ell(n)/2-n^2/n_1-2c n^2\ge t_2(n)-3cn^2$. Thus, by the stability result for Mantel's theorem 
by Erd\H os~\cite{Erdos67a} and Simonovits~\cite{Simonovits68}, 
the graph $G'$ must indeed be $\delta n^2/2$-close in edit distance to $T_2(n)$.

Otherwise, $G'$ is the disjoint union of two cliques. Let us show that one of them has size at most $\delta n/2$. Indeed, otherwise
$G'$ has a triangle packing covering all but at most $n/2+2$ edges  by 
Theorem~\ref{thm:decomp}, meaning that $\pi_3(G')\le e(G')+n/2+2$. Also, $e(G')$ is
maximum when clique sizes are as far apart as possible. Thus, by the lower bound on $\pi_3(G)\le \pi_3(G')+2c n^2$, we conclude that e.g.\ $\ell(n)-3c n^2\le {\delta n/2\choose 2}+{(1-\delta/2)n\choose 2}$, leading to a contradiction to our choice of constants. Therefore, $G'$ is at most $n\cdot \delta n/2$ adjacency edits away from $K_n$, as desired.
\end{proof}

The key steps in proving Theorem~\ref{th:1} are Lemmas~\ref{lm:keyT2n}--\ref{lm:Kn}.

\begin{lemma}\label{lm:keyT2n} 
There exist constants $\delta>0$ and $n_1\in \mathbb{N}$ such that, among all graphs on $n \geq n_1$ vertices which are $\delta n^2$-close to $T_2(n)$, the maximiser of $\pi_3$ is $T_2(n)$. 
\end{lemma}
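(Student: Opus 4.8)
The plan is to pass to triangle packings. Write $\nu_\triangle(G)$ for the maximum number of pairwise edge-disjoint triangles in $G$. A $\{K_2,K_3\}$-decomposition with $t$ triangles and $s$ single edges has cost $2s+3t=2e(G)-3t$ (since $s+3t=e(G)$), and this is minimised by taking $t=\nu_\triangle(G)$; hence $\pi_3(G)=2e(G)-3\nu_\triangle(G)$ and, in particular, $\pi_3(T_2(n))=2t_2(n)$. Fix a bipartition $(A,B)$ of $V(G)$ with $\{|A|,|B|\}=\{\lfloor n/2\rfloor,\lceil n/2\rceil\}$ minimising $|E(G)\triangle E(T_2(n))|$, let $F$ be the subgraph of edges lying inside $A$ or inside $B$, put $b:=e(F)$, and let $m$ be the number of non-edges of $G$ between $A$ and $B$. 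Then $b+m\le\delta n^2$ and $e(G)=t_2(n)+b-m$, so $\pi_3(G)=2t_2(n)+2(b-m)-3\nu_\triangle(G)$. If $b\le m$ then $\pi_3(G)\le 2e(G)\le 2t_2(n)$, with equality only if $G$ is triangle-free with $e(G)=t_2(n)$, i.e.\ $G\cong T_2(n)$ by the uniqueness part of Mantel's theorem. Thus the real content is the case $b>m$, where it suffices to prove the packing bound
\[
\nu_\triangle(G)\ \ge\ b-m,
\]
since this gives $\pi_3(G)\le 2t_2(n)-(b-m)<2t_2(n)=\pi_3(T_2(n))$, making $T_2(n)$ the unique maximiser.

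To prove the packing bound I would use only triangles of the form: one bad edge $uv$ (say inside $A$) together with two legs $uw,vw$ to a common apex $w\in B$. One checks that a family of such triangles is edge-disjoint exactly when on each side the chosen bad edges are properly edge-coloured by their apexes, together with the absence of clashes of the type "an $A$-edge at $u$ has apex $w$ while a $B$-edge at $w$ has apex $u$". Let $X$ be the set of vertices $v$ with $d_F(v)+\overline d(v)>\sqrt\delta\,n$, where $\overline d(v)$ counts non-neighbours of $v$ in the opposite part; since $\sum_v\bigl(d_F(v)+\overline d(v)\bigr)=2b+m\le 3\delta n^2$ we get $|X|\le 3\sqrt\delta\,n$. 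On $G-X$ every bad edge has both endpoints of $F$-degree and deficiency at most $\sqrt\delta\,n$, while every apex pool has size at least $n/2-O(\sqrt\delta\,n)$; greedily colouring the bad edges, and picking at each step a least-used available apex (which keeps any apex from being reused more than $O(\delta n)$ times, and so keeps the cross-side coordination under control), one packs a triangle for \emph{every} bad edge of $G-X$ with no loss. Hence $\nu_\triangle(G)\ge b-b_X$, where $b_X$ is the number of bad edges meeting $X$.

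It then remains to extract at least $b_X-m$ further triangles from the bad edges incident to $X$. For a fixed $x\in X$ the bad edges at $x$ form a near-star, and the number of them that can be realised as pairwise edge-disjoint triangles (also avoiding the legs already committed in $G-X$) is controlled, via the deficiency version of Hall's theorem, by the bipartite "available-apex" graph at $x$; the relevant deficiency is governed by terms of size roughly $\overline d(x)\,d_F(x)/|B|$, by $|\{w\in\Gamma(x)\cap B:\ vw\notin E(G)\ \text{for all leaves } v\text{ in a set }S\}|\le m/|S|$, and by small coordination corrections, and one must also deal separately with the few vertices $x$ having almost no neighbours in the opposite part (which contribute $\approx m$ to $m$ and can only support $|{\Gamma(x)}\cap B|$ triangles). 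Summing over $x\in X$, using $\sum_{x\in X}\overline d(x)\le 2m$, $\max_x d_F(x)\le\lceil n/2\rceil$, $|X|\le 3\sqrt\delta\,n$ and finally choosing $\delta$ small enough, the number of $X$-incident bad edges that fail to be packed is at most $m$, yielding $\nu_\triangle(G)\ge b-m$. The step I expect to be the main obstacle is exactly this last accounting: one has to show that concentrating the bad edges on few vertices of large $F$-degree still costs at most $m$ triangles, and to orchestrate the apex choices across the three families (inside $A$, inside $B$, and through $X$) so that edge-disjointness is preserved. Squeezing the loss down to $m$ rather than a larger constant multiple of $m$ is the delicate point; if the general argument only delivers $\nu_\triangle(G)\ge b-O(m)$ (which already settles $b\ge Cm$ for a constant $C$), one must treat the residual near-extremal range $m<b<Cm$ by a separate, more hands-on estimate of $\nu_\triangle(G)$ for graphs with $e(G)>t_2(n)$ and only $O(m)$ bad edges.
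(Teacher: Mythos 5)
Your framework is exactly the one the paper uses: write $\pi_3(G)=2e(G)-3\nu_\triangle(G)$, so that the lemma reduces to a lower bound on the number of edge-disjoint triangles in terms of the edge surplus $k:=e(G)-t_2(n)=b-m$. The gap is in the key packing bound you isolate: $\nu_\triangle(G)\ge b-m$ is \emph{false} in general for graphs in the class under consideration. Indeed, $b-m=k$ for your balanced partition, and Gy\H ori's theorem (the very result the paper invokes here) says that ``every graph with $t_2(n)+k$ edges has $k$ edge-disjoint triangles'' holds only for $k\le 2n-10$ ($n$ odd) resp.\ $k\le 3n/2-5$ ($n$ even), and that these thresholds are \emph{sharp}. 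The graphs witnessing sharpness for $k$ just above these thresholds are necessarily $O(n)$-close, hence $\delta n^2$-close, to $T_2(n)$ (any graph with $t_2(n)+O(n)$ edges that is far from bipartite has far more than $k$ edge-disjoint triangles), so they lie squarely inside the class your lemma must handle and satisfy $\nu_\triangle<b-m$. Consequently no amount of care in orchestrating the apex choices can make your ``no loss beyond $m$'' accounting go through; the delicate point you flag at the end is not merely delicate but impossible. Your proposed fallback is also mis-calibrated: the unavoidable deficit is governed by $k^2/n^2$, not by $m$, so even $\nu_\triangle(G)\ge b-O(m)$ can fail.

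The repair is cheap and is what the paper does: the desired conclusion $\pi_3(G)<2t_2(n)$ for $k\ge1$ only needs $3\nu_\triangle(G)>2k$, i.e.\ $\nu_\triangle(G)>2k/3$, and Gy\H ori's bound $\nu_\triangle(G)\ge k-\e k^2/n^2\ge k(1-\e\delta)$ gives this with enormous slack once $\delta\ll\e\ll1$ (the cases $k\le0$ being handled exactly as in your $b\le m$ case, via Mantel uniqueness). So you should weaken your target from $\nu_\triangle\ge k$ to $\nu_\triangle\ge(1-o(1))k$; your greedy apex assignment with the exceptional set $X$ and the Hall-type analysis at high-degree vertices is then essentially a proof of Gy\H ori's bound (it closely parallels the self-contained argument in the paper's appendix, which peels off triangles through the high-bad-degree vertices first and then runs a load-balanced greedy on the rest), and the error terms you are worried about can simply be absorbed into the $\e k^2/n^2$ slack rather than squeezed below $m$.
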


\begin{proof}
Choose sufficiently small $\e\gg \delta\gg 1/n_1>0$. Let $G$ be an arbitrary graph with $n\ge n_1$ vertices which is $\delta n^2$-close to~$T_2(n)$. 
We will show that $\pi_3(G)\le \pi_3(T_2(n))$ with equality if and only if $G= T_2(n)$.
In fact, this claim can be directly derived from the result of Gy\H ori~\cite[Theorem~1]{Gyori88} that a graph with $n$ vertices and $t_2(n)+k$ edges,  where $n\to\infty$ and $k=o(n^2)$,
has at least $k-O(k^2/n^2)$ edge-disjoint triangles.  More specifically, for each $\e>0$ there exists $\delta>0$ and $n_0 \in \mathbb{N}$ such that every graph with $n\ge n_0$ vertices and $t_2(n)+k$ edges, where $k\leq \delta n^2$, has at least $k-\e k^2/n^2$ edge-disjoint triangles. (See also~\cite[Theorem~1]{Gyori91} for a generalisation of this to $r$-cliques for any fixed $r\ge 3$.) Since $G$ is $\delta n^2$-close to $T_2(n)$, it must have at most $t_2(n)+\delta n^2$ edges. From this and  $1/n\ll \delta\ll \e\ll 1$, we have that, for $k:=e(G)-t_2(n)$, 
$$
\pi_3(G)\leq 2(t_2(n))+k)-3(k-\e k^2/n^2)=2t_2(n)-k(1-3\e k/n^2)\leq 2t_2(n).
$$ 
Clearly, if equality is achieved then $k=0$, that is, $e(G)=t_2(n)$; furthermore, $G$ must be triangle-free and thus $G= T_2(n)$, as required.
\end{proof}

Next, we need to analyse graphs that are close to $K_n$. 
If $n\equiv 1,3\pmod 6$, then let $\C E'_n$ consist of those graphs which are obtained from $K_n$ by removing a matching of size $m\equiv 2\pmod 3$;
otherwise let $\C E'_n:=\{K_n\}$. Also, define
$$
w(n):=\left\{\begin{array}{ll} n/2,& n\equiv 0,2\pmod 6,\\
2,&n\equiv 1,3\pmod 6,\\
n/2+1,&n\equiv 4\pmod 6,\\
4,& n\equiv 5 \pmod 6.\end{array}\right.
$$
 Using Theorem~\ref{thm:decomp} and the calculation for $K_n$ described in Table~\ref{ta:1}, one can show that  $\pi_3(G)={n\choose 2}+w(n)$ for all large $n$ and every $G\in\C E'_n$. We are going to show that these are exactly the extremal graphs among those close to~$K_n$.
 It is more convenient to do first the case when we have some bound on the minimum degree of a graph and then derive the general case (in a separate Lemma~\ref{lm:Kn}).

\begin{lemma}\label{lm:KnDeg}
	There exist constants $\delta>0$ and $n_0\in \mathbb{N}$ such that the following holds.
	Let $G$ be a graph  on $n \geq n_0$ 
 vertices 
with minimum degree  at least $n/8$ such
	that $G$ is $\delta n^2$-close to $K_n$ and $\pi_3(G)\ge {n\choose 2}+w(n)$.
	Then $G\in\C E'_n$.
\end{lemma}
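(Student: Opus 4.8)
We must show that a graph $G$ on $n$ vertices with $\delta(G) \ge n/8$, which is $\delta n^2$-close to $K_n$ and satisfies $\pi_3(G) \ge \binom{n}{2} + w(n)$, must lie in $\mathcal{E}'_n$. The key idea is that $\pi_3(G)$ has a natural lower-bound structure coming from any triangle decomposition: if we can cover all but $m$ edges of $G$ by edge-disjoint triangles (where the $m$ leftover edges form a graph $L$), then $\pi_3(G) \le e(G) + m = \binom{n}{2} - e(\overline G) + m$. Conversely, any $\{K_2,K_3\}$-decomposition of $G$ induces a partition of $E(G)$ into triangles and single edges, and the single edges together with the non-edges $\overline G$ must ``absorb'' all the parity/divisibility obstructions. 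So the first step is to make precise the following claim: \emph{for a graph $G$ close to $K_n$, $\pi_3(G) = \binom n2 + m^*(G)$ where $m^*(G)$ is the minimum number of edges one must delete from $G$ to obtain a triangle-divisible graph}, provided the minimum degree is large enough for Theorem~\ref{thm:decomp} to apply after such a deletion. Since $G$ is $\delta n^2$-close to $K_n$ and has min-degree $\ge n/8$, after deleting $O(\sqrt{\delta}\, n)$ well-chosen edges the min-degree is still $\ge (0.9+\varepsilon)n$ — wait, this needs care: $\delta n^2$-closeness only gives min-degree close to $n$ on \emph{most} vertices. This is exactly where the extra hypothesis $\delta(G)\ge n/8$ is used, but $n/8$ is far below $0.9n$, so we cannot apply Theorem~\ref{thm:decomp} directly to $G$. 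Instead I would first argue that the low-degree vertices are few and handle them separately.

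\textbf{Step 1: structural cleanup of $\overline G$.} Write $\overline G$ for the complement; by closeness $e(\overline G) \le \delta n^2$. Call a vertex \emph{bad} if it has more than $\sqrt\delta\, n$ non-neighbours in $G$; there are at most $2\sqrt\delta\,n$ bad vertices. For the good vertices, $d_G(v) \ge (1-\sqrt\delta)n$. I would peel off the bad vertices one at a time: for each bad vertex $v$, decompose the edges at $v$ greedily into triangles plus at most $d_{\overline G}(v)+1$ leftover edges (using that $G$ minus $v$ restricted to $\Gamma_G(v)$ is dense, so a near-perfect triangle packing on the link exists — or more crudely, just use single edges at $v$, contributing $2d_G(v)$ to the cost, which is $\le 2(n-1)$ per bad vertex, total $O(\sqrt\delta\, n^2)$). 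This is too lossy. A cleaner route: show directly that if $\delta(G)\ge n/8$ and $G$ is $\delta n^2$-close to $K_n$ then in fact $\delta(G) \ge (1-2\sqrt\delta)n$, because a vertex of degree $<(1-2\sqrt\delta)n$ together with $\ge 2\sqrt\delta n^2/n$ missing edges would push $e(\overline G)$ past $\delta n^2$ — no, a single low vertex only accounts for $n$ missing edges. So genuinely there can be a few vertices of degree around $n/8$, and they must be dealt with. I would handle them by noting that each such vertex $v$, with its $\le n$ incident edges, can be absorbed: greedily pull out edge-disjoint triangles through $v$ using the (dense) graph on $\Gamma_G(v)$, leaving $O(1)$ edges at $v$; the obstruction is only the divisibility of the number of edges at $v$ and their parity, costing $O(1)$ per such vertex. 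Since there are $O(\sqrt\delta\, n)$ of them this costs $O(\sqrt\delta\,n)$, negligible against $w(n) = \Theta(n)$ or $\Theta(1)$.

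\textbf{Step 2: reduce to the high-min-degree case and count the obstruction exactly.} After Step 1 we are left with a graph $G'$ on $(1-o(1))n$ vertices with $\delta(G')\ge (0.9+\varepsilon)n$ — here I need closeness to force \emph{every} surviving vertex to have high degree, which follows once the bad vertices are removed. Now $G'$ is triangle-divisible after deleting a minimum ``correction set'' $D$ of edges, and by Theorem~\ref{thm:decomp} applied to $G'-D$ we get $\pi_3(G') \le e(G') + |D|$. The crux is to show $|D| = m^*(G')$ equals exactly the combinatorial minimum described before Table~\ref{ta:1}: one needs each vertex of $\overline{G'}$-degree of the wrong parity to be incident to a deleted edge (parity constraint, forcing a matching-like structure — a union of paths/stars), and $e(\overline{G'}) + |D| \equiv \binom{n}{2} \equiv 0$ or $1 \pmod 3$ appropriately (divisibility constraint). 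A short case analysis on $n \bmod 6$ and on $e(\overline G) \bmod{\text{small}}$, exactly as in the discussion preceding Table~\ref{ta:1}, shows the minimum cost of $\overline G$ plus correction edges is at least $w(n)$, with equality forcing $\overline G$ to be a matching of size $\equiv 2 \pmod 3$ (when $n\equiv 1,3$) or $\overline G$ empty/small as dictated. Combined with the hypothesis $\pi_3(G) \ge \binom n2 + w(n)$, equality must hold throughout, pinning down $\overline G$ and hence $G \in \mathcal E'_n$.

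\textbf{Main obstacle.} The hard part is Step 1: reconciling the weak hypothesis $\delta(G) \ge n/8$ with the need for min-degree $\ge 0.9n$ in Theorem~\ref{thm:decomp}. One must extract the few low-degree vertices and decompose their incident edges into triangles plus a bounded-cost remainder \emph{without} disturbing the delicate exact count of the obstruction on the rest of the graph — in particular, one must ensure the edges one removes to peel off a bad vertex do not create new parity/divisibility defects elsewhere that would spoil the tight bound $w(n)$. Getting the bookkeeping of these $O(\sqrt\delta\,n)$ corrections to be genuinely lower-order (not merely $O(\delta n^2)$, which would swamp $w(n)$ when $w(n)=O(1)$ for $n \equiv 1,3 \pmod 6$) is the delicate point, and I expect it requires choosing the triangles through each bad vertex carefully so that the leftover at each is at most a constant number of edges.
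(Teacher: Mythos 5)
Your overall framework --- bound $\pi_3(G)\le \binom n2-e(\overline G)+m^*$ where $m^*$ is the size of a correction set whose deletion leaves a triangle-divisible graph decomposable by Theorem~\ref{thm:decomp}, then play the savings $e(\overline G)$ against the cost $m^*$ --- is the same as the paper's, and you correctly identify both danger points. But the proposal does not resolve either of them. First, the low-degree vertices: you propose to absorb each vertex of the set $U$ of low-degree vertices at a cost of $O(1)$ leftover edges and then declare the total $O(\sqrt\delta\,n)$ ``negligible against $w(n)$''; as you yourself note, this is false when $n\equiv 1,3,5\pmod 6$ and $w(n)=O(1)$. The way out is not to make the absorption cheaper but to show that $U=\emptyset$ in any extremal graph: each $u\in U$ is incident to at least $cn-o(n)$ non-edges (for a suitable threshold $c$ defining $U$), so it contributes savings of order $cn$ to $-e(\overline G)$ while adding only $O(1)$ (in fact at most one $K_2$ from $u$ to the rest, via a Dirac-type matching in $G[\Gamma(u)]$ --- this is where $\delta(G)\ge n/8$ enters) to $m^*$; the net effect is $-\Omega(cn|U|)$, contradicting $\pi_3(G)\ge\binom n2+w(n)\ge \binom n2+2$ unless $U=\emptyset$. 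Your proposal never makes this cost--benefit comparison, so the case $w(n)=O(1)$ is genuinely open in your argument.

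Second, and more importantly, your Step 2 asserts that ``a short case analysis on $n\bmod 6$'' shows the correction cost minus $e(\overline G)$ is at least $w(n)$ with equality forcing $\overline G$ to be a matching of size $\equiv 2\pmod 3$. The parity/divisibility constraints alone do not give this. Writing $S$ for the odd-degree vertices, $M$ for a maximum matching in $G[S]$ and $X=S\setminus V(M)$, the correction set has size about $|M|+|X|+\rho$ (each unmatched odd vertex costs a full cherry-edge, not half an edge), while the trivial bound on the savings is only $e(\overline G)\ge |S|/2=|M|+|X|/2$; the difference $|X|/2+\rho$ is not bounded by $w(n)$ without further input. The missing ingredient is a \emph{lower bound on $e(\overline G)$ in terms of the structure of $S$}: since $M$ is maximum, $X$ is independent in $G$ (giving $\binom{|X|}{2}$ non-edges) and, by an augmenting-path argument, every edge of $M$ has at least $|X|$ non-edges to $X$ when $|X|\ge 2$ (giving another $|M|\,|X|$). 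Plugging $e(\overline G)\ge\binom{|X|}{2}+|M|(|X|-\mathbbm{1}_{|X|=1})$ into the cost comparison is what forces $|X|\le 2$ and then, after checking the mod-$3$ and parity residues in each case, pins $\overline G$ down to a matching of size $\equiv 2\pmod 3$ (or to the empty graph). Without this quantitative link between the odd-degree structure and the number of non-edges, configurations such as a large independent odd-degree set $X$ are not excluded, and the equality analysis cannot be closed.
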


\begin{proof} 
Choose small constants in the following order: $c \gg \delta \gg 1/n_0> 0$.
Suppose that $G$ is a graph of order $n\ge n_0$ as  in the statement of the lemma.
Let $w:=w(n)$. 
	
Let $U:=\{v\in V(G):d_G(v)\le (1-c)n\}$. Then 
	\[
\frac{|U|cn}{2}\le e(\,\overline{G}\,)\le \delta n^2,
\]
	and so $|U|\le \frac{2\delta}{c}n$. Denote $W:=V(G)\setminus U$, and let $S:=\{v\in W: d_G(v)\mbox{ is odd}\}$. Let $M$ be a set of edges forming a maximum matching in $G[S]$, and denote $X:=S\setminus V(M)$. Then $X$ is an independent set and thus ${|X|\choose 2}\le \delta n^2$, which implies that rather roughly
	\begin{equation}\label{eq:|X|}
	|X|<cn.
	\end{equation} 
	Moreover, for every edge $yz\in M$ and any two distinct  vertices $y',z'\in X$, at most one of  $yy'$ and $zz'$ can be an edge of $G$ (otherwise $y'yzz'$ is an augmenting path contradicting the maximality of $M$). It follows that, if $|X|\not=1$, then for every edge $yz\in M$ there are at least $|X|$ edges missing between $yz$ and $X$. Let $Y_W$ denote the set of missing edges in $G[W]$. Thus
	\begin{equation}\label{eq:yw}
|Y_W|\ge \binom{|X|}{2}+|M|(|X|-\I 1_{|X|=1}),
	\end{equation}
where the indicator function $\I 1_{|X|=1}$ is 1 if $|X|=1$ and is 0 otherwise. Moreover, the set $Y_U$ of missing edges in $G$ with at least one endpoint in $U$ satisfies
	\begin{equation}\label{eq:yu}
	|Y_U|\ge cn|U|-\binom{|U|}{2}
	\end{equation}
	by the definition of $U$. Note that $e(G)=\binom{n}{2}-|Y_W|-|Y_U|$.
See Figure~\ref{fig:decom} for a sketch ot $Y_W$ and $Y_U$.	
	
	We now build a decomposition $\C D$ of $G$ into edges and triangles, starting with $\C D=\emptyset$. If we add edges/triangles to $\C D$, we regard them as removed from~$E(G)$. It is convenient to split our argument into the following two cases.

\medskip \noindent\textbf{Case 1:} $U\not=\emptyset$ or $S=\emptyset$.\medskip

In this case, our procedure for constructing $\C D$ is as follows. 
	
	\begin{description}
		
		\item[Step 1:] Add the following to $\C D$ as $K_2$'s: the edges of 
the matching $M$ and the edges of some $\lfloor |X|/2\rfloor$ cherries with distinct endpoints in $X$ such that their middle points are pairwise distinct. 
		
		
		\item[Step 2:] For each $u \in U$, one at a time, add  to $\C D$ a maximum set of edge-disjoint $K_3$'s containing $u$ and two vertices from $W$. Add  all remaining edges incident to vertices in $U$ as~$K_2$'s to~$\C D$.
			
		
		\item[Step 3:] \textbf{(a)} Let $S'\subseteq V(G)$ be the set of vertices with odd degree after Step $2$. Add to $\C D$ the edges of some $|S'|/2$ cherries with distinct endpoints in $S'$ such that their middle points are pairwise distinct. \\
\textbf{(b)}   If the number of remaining edges is not divisible by $3$, then fix this by adding to $\C D$ (as single edges) the edge set of some cycle of length $4$ or $5$.

		
		\item[Step 4:] 
		      Add a perfect triangle decomposition of the remaining edges to $\C D$. 
	\end{description}

For $i \in \{1,2,3\}$, let $Z_i$ be the set of edges that are added to $\C D$ in Step~$i$ as copies of $K_2$. See Figure~\ref{fig:decom} for some illustrations of the above steps.

	\usetikzlibrary{calc}
	\tikzset{
		vtx/.style={inner sep=1.2pt, outer sep=0pt, circle, fill=black,draw=black}, 
		vtx_gray/.style={inner sep=1.2pt, outer sep=0pt, circle, fill=gray,draw}, 
		vtx_mini/.style={inner sep=1.2pt, outer sep=0pt, circle, fill=black,draw=black}, 
		zw/.style={color=blue}, 
		zu/.style={color=red}, 
		yw/.style={color=blue,dashed}, 
		yu/.style={color=red,dashed}, 
	} 
	
	\begin{figure}[ht]
		\begin{center}
			\begin{tikzpicture}[scale=1.5]
			\draw (0,0) ellipse (0.5cm and 0.8cm);
			\draw[rounded corners=20pt] (1,-1) rectangle (3,1.2) ;
			\draw (0,-0.4) node[vtx,label=left:$ $](v){};
			\draw (0,0.4) node[vtx,label=left:$ $](u){};
			\foreach \x in {-2,-1}{
				\draw[yu] (u) -- (1.5,0.4*\x) node[vtx_mini]{} ++(0,0.2) node[vtx_mini]{} -- (u);
			}
			\draw[yu] (u) -- (1.5,0.4*0) node[vtx_mini]{};
			
			\foreach \x in {2,1}{
				\draw[yu] (v) -- (1.5,0.4*\x) node[vtx_mini]{} ++(0,0.2)node[vtx_mini]{} -- (v);
			}
			\draw[yu] (v) -- (1.5,0.4*0.5) node[vtx_mini]{};

			\foreach \x in {-0.1,0.1}{
				\draw[yu] (-0.2,\x)node[vtx_mini]{}  -- (0.2,\x)node[vtx_mini]{} ;
			}
			
			
			\foreach \x in {3,2,1,0}{
			}

			\draw (2.25,-0.3) 
			+(0.2,0.2) node[vtx](x1){}
			+(-0.2,0.2) node[vtx](x2){}
			+(-0.2,-0.2) node[vtx](x3){}
			+(0.2,-0.2) node[vtx](x4){};
			\draw[yw]
			(x1) -- (x2) -- (x3) -- (x4) -- (x2) (x4)--(x1) -- (x3)
			;
			
			\draw[yw] (2,3*0.2+0.2) -- (2.5,2*0.2+0.2) to[bend left] (x4);;

			\foreach \x in {3,2,1,0}{
				\draw[yw] (2,\x*0.2+0.2) node[vtx_mini]{}-- (x1);
				\draw[yw] (2,\x*0.2+0.2)  node[vtx_mini]{}to[bend right] (x3);
			}
			
			\foreach \x in {2,1,0}{
				\draw[yw] (2,\x*0.2+0.2) node[vtx_mini]{} -- (2.5,\x*0.2+0.4) node[vtx_mini]{};
			}
			\draw[yw] (2.5,0.2) node[vtx_mini]{} to[bend left] (x4);

			\draw (2,-1.3) node{$W$};
			\draw (0,-1.3) node{$U$};
			\draw[blue] (2.8,0.2) node{$Y_W$};
			\draw[red] (0.7,-0.5) node{$Y_U$};
			\draw (1,-1.7) node{(a)};
			\end{tikzpicture}
			\hskip 3em		
			\begin{tikzpicture}[scale=1.5]
			\draw (0,0) ellipse (0.5cm and 0.8cm);
			\draw[rounded corners=20pt] (1,-1) rectangle (3,1.2) ;
			\draw (0,-0.4) node[vtx,label=left:$ $](v){};
			\foreach \x in {-2,-1}{
				\draw[zu] (v) -- (1.3,0.4*\x) node[vtx_mini]{} -- ++(0,0.2) node[vtx_mini]{}  -- (v);
			}
			\draw[zu] (v) -- (1.3,0.4*0) node[vtx_mini]{};
			
			\draw (0,0.4) node[vtx,label=left:$ $](u){};
			\foreach \x in {2,1}{
				\draw[zu] (u) -- (1.3,0.4*\x) node[vtx_mini]{} -- ++(0,0.2) node[vtx_mini]{} -- (u);
			}
			\draw[zu] (u) -- (1.3,0.4*0.5) node[vtx_mini]{};
			\draw[zu](u)--(v);
			
			\draw (1.3,0.4*0)  node[vtx_mini](x1){}  (1.6,0.1) node[vtx_mini](x2){} (1.3,0.4*0.5)  node[vtx_mini](x3){};
			\draw[green!30!black,line width=1pt] (x1)  -- (x2) -- (x3) ;
			
			\foreach \x in {3,2,1,0}{
				\draw[zw] (2,\x*0.2+0.2) node[vtx_mini]{}-- (2.5,\x*0.2+0.2) node[vtx_mini]{};
			}

			\draw (2.25,-0.3) node{$X$};
			\draw[zw] (2.25,-0.3) 
			+(0.2,0.2) node[vtx](x1){}
			+(-0.2,0.2) node[vtx](x2){}
			+(-0.2,-0.2) node[vtx](x3){}
			+(0.2,-0.2) node[vtx](x4){}
			(x1) -- ($(x1)!0.5!(x4) + (0.2,0)$) node[vtx_mini]{}-- (x4)
			(x2) -- ($(x2)!0.5!(x3) + (-0.2,0)$) node[vtx_mini]{}  -- (x3)
			;
			\draw (2,-1.3) node{$W$};
			\draw (0,-1.3) node{$U$};
			\draw (2.7,0.5) node{$M$};
			
			\draw[blue] (2.8,0) node{$Z_1$};
			\draw[red] (0.75,0.05) node{$Z_2$};
			\draw[green!30!black] (1.5,-0.1) node{$Z_3$};
			\draw (1,-1.7) node{(b)};			
			\end{tikzpicture}
		\end{center}
		\caption{
		(a) Missing edges in $Y_W$ are colored blue and edges in $Y_U$ are red. 
		(b)
			Edges in $Z_1$ are colored blue, edges in $Z_2$ are red and in $Z_3$ green.
			The same vertices are on the right, where dashed are some of the  missing edges.
			Note that this is a sketch and vertices in $W$ can incident to both blue and red (dashed) edges.
			}\label{fig:decom} 
	\end{figure}
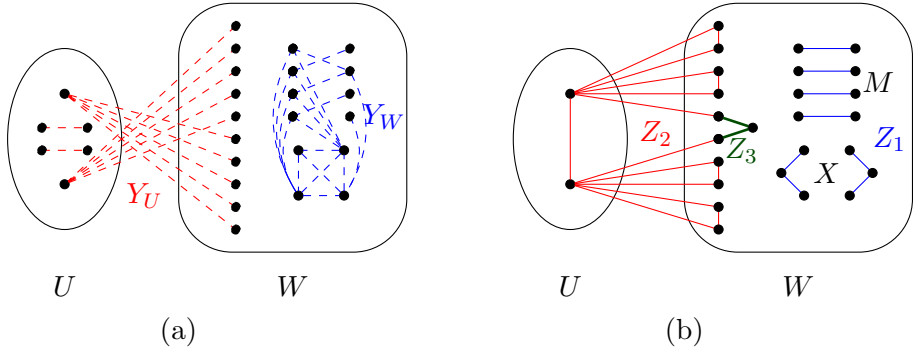

	\begin{claim*}
		The above steps can be carried out as stated. Moreover, the obtained decomposition $\C D$ of $G$ has at most $|M| + |X| + \binom{|U|}{2} + 2|U| + 6$ copies of~$K_2$.
	\end{claim*}
	
	\begin{proof}[Proof of Claim.]
In order to do Step $1$ as stated, we can iteratively pick any two new vertices $x,y\in X$ and then an arbitrary vertex $z$ which is suitable as the middle point for a cherry on~$xy$. Note that the number of choices for $z$ is at least $n-2-2cn$, the number of common neighbours of $x,y\in X\subseteq W$, minus $|X|-1$, the number of vertices previously used as middle points. This is positive by~\eqref{eq:|X|} and $c\ll 1$, so we can always proceed. Note for future reference that every vertex is incident to at most 3 edges removed in Step~1.
Also, Step $1$ adds $|Z_1|=|M| + 2(\lfloor |X|/2\rfloor) \le |M| + |X|$ copies of $K_2$ to $\C D$.

Clearly, Step~2 can always be processed.  Consider the moment when we apply Step $2$ to some $u \in U$. In the current graph, the induced subgraph $G[\Gamma(u) \cap W]$ has minimum degree at least $|\Gamma(u)\cap W|-cn-3$, which is at least $|\Gamma(u)\cap W|/2$ since
$|\Gamma(u)|\ge n/8-3$. So by Dirac's theorem, this subgraph has a  matching covering all but at most one vertex, that is,  all edges between $u$ and $W$ except at most one are decomposed as triangles in Step~2. 
Let $U'$ be the set of those $u\in U$ for which an exceptional edge occurs.
Thus we have $|U'|\le |U|$ copies of $K_2$ connecting $U$ to $W$ that are added to $\C D$ in Step~2.
There are trivially at most $\binom{|U|}{2}$ edges with both endpoints in $U$.
So Step $2$ adds $|Z_2|\le \binom{|U|}{2} + |U|$ copies of $K_2$ to $\C D$.
Note that all edges incident to $U$ are decomposed after Step~$2$.

Since all vertices of $W$ but at most one had even degrees before Step~2, we have that $S'$ has at most $|U'|+1\le |U|+1$ vertices.  Similarly as in Step~1, a simple greedy algorithm finds all cherries as stated Step 3(a). (Note that $S'$, as the set of all odd-degree vertices, has even size.)

The minimum degree of $G[W]$ after Step 3(a) is at least $0.99n$, since each $w \in W$ has at most $2|U|+6$ incident edges removed (at most $2|U|$ from Step $2$ and at most 3 in each of Steps 1 and~3(a)). Thus, we can find the required 4- or 5-cycle in Step 3(b).

Clearly, we add $|Z_3|\le |S'|+5\le |U|+6$ copies  of $K_2$ to $\C D$ in Step~3.

Note that, at the end of Step~3, the graph $G[W]$ has minimum degree at least, say,~$0.98n$ while all its degrees are even. By Theorem~\ref{thm:decomp}, 
all remaining edges can be decomposed using only triangles, so Step $4$ indeed removes all remaining edges.
		
Step $4$ adds no additional $K_2$'s, so the total number of $K_2$'s in $\C D$ is 
		\[
		|Z_1|+|Z_2|+|Z_3| \leq |M|+|X|+\binom{|U|}{2} + 2|U|+6,
		\]
 finishing the proof of the claim.	\end{proof}
	
	Now we compute the cost of $\C D$. Using the notation from above, we have
	\begin{align}
	w&\le\pi_3(G)-\binom{n}{2}\le -|Y_U|-|Y_W|+|Z_1|+|Z_2|+|Z_3|\nonumber\\
 &\le -|Y_U|-|Y_W|+|M|+|X|+\binom{|U|}{2} + 2|U|+6.\label{eq:pi3}
	\end{align}
	Substituting the bounds from \eqref{eq:yw} and \eqref{eq:yu} and rearranging the terms,
we get
	\begin{equation}\label{eq:expanded}
	w\le 
 \left(2\binom{|U|}{2}+2|U|-cn|U|+6\right) + \left(3-|X|\right)\left(\frac{|X|}{2}+|M|\right)+\left(\I 1_{|X|=1}-2\right)|M|.\end{equation}
	
First, suppose that $|U|>0$. Then, the estimate $|U|\le 2\delta n/c$ yields that
\[ 2\binom{|U|}{2}+2|U|-cn|U|+6 \le - cn|U|/2 \le -cn/2.\]
Since $w\ge 2$, we must have that $|X|\le 1$. 
Observe that $n$ is odd as otherwise $w\ge n/2$ and, by $|M|\le n/2$, the cases $|X|\in\{0,1\}$ also contradict~\eqref{eq:expanded}. So every vertex of degree $n-1$ has even degree, meaning that every vertex of $S$ is in some pair from $Y_W$ or~$Y_U$. Hence, $2|M|\le 2|Y_W|+|Y_U|$. Substituting this into the right-hand size of \eqref{eq:pi3} and using our bound on $|Y_U|$ from~\eqref{eq:yu}, we obtain 
		\begin{align*}w
&\le -\frac{|Y_U|}2+|X|+\binom{|U|}{2}+2|U|+6
		\le \frac{3}{2}\binom{|U|}{2}+2|U|-\frac{cn|U|}2+7,
		\end{align*}
		which again is negative for $|U|>0$ and large $n$, contradicting $w\ge 2$.

Thus $U$ is empty and, by the assumption of Case~1, $S$ is also empty (and so are $X$ and $M$).
This gives that the initial graph $G$ has
minimum degree at least $(1-c)n$,  $|Z_1|=|Z_2|=0$, $S'=\emptyset$, and no $K_2$'s are added to $\C D$ in Step~3(a).

If $n$ is even, then every vertex of $G$ has at least one missing edge, $e(G)\le {n\choose 2}-\frac n2$ and
 $$
\pi_3(G)\le {n\choose 2}-\frac n2+|Z_3|\le {n\choose 2}-\frac n2+5,
$$
 which is strictly less than $\pi_3(K_n)$, a contradiction.

Let $n$ be odd and let $r:={n\choose 2}-e(G)$ be the number of missing edges in~$G$. Suppose that $r>0$, as otherwise $G=K_n$ and we are done. The upper bound on $\pi_3(G)$ given by $\C D$ is $\rho_r+{n\choose 2}-r$, where we define $\rho_r$ as the unique element of $\{0,4,5\}$ with ${n\choose 2}-\rho_r-r\equiv 0\pmod 3$.
Therefore, $r \le 3$ as otherwise $\pi_3(G) \le {n\choose2}+1$ contradicting $w \ge 2$. On the other hand, all the degrees of $\O G$ are even so $r=3$ and the only non-empty component of $\O G$ is a triangle.
However, this contradicts $w\ge2$ because
\[
\pi_3(G) = \left\{\begin{array}{ll}
{n \choose 2} - 1,&n\equiv 1,3\pmod 6,\\
{n \choose 2} + 1,& n\equiv 5 \pmod 6.\end{array}\right.
\]

\medskip \noindent\textbf{Case 2:} $U=\emptyset$ and $S\not=\emptyset$.\medskip

Some things simplify in this case (as we do not need to deal with $U$). On the other hand, we have to be a bit more careful with calculations, as the new extremal graphs ($K_n$ minus a matching) fall into this case. In particular, removing a 4- or 5-cycle may be too wasteful here. So we construct a decomposition $\C D$ of $G$ as follows. Recall that $M$ is a maximum matching in $G[S]$ and $X$ is  the set of vertices of $S$ not matched by~$M$. 

	\begin{description}
\item[Step 1:] Make the graph triangle-disivible by removing the following as $K_2$'s. If $X=\emptyset$, then remove all but one edge $xy\in M$ and a path of length $\rho+1\in\{1,2,3\}$ whose endpoints are $x$ and $y$ (thus, for $\rho=0$, we remove just the matching~$M$). If $X$ is non-empty, then remove $M$ and the edge sets of some $|X|/2-1$ paths of length 2 and  one path of length $\rho+2\in \{2,3,4\}$ so that their degree-1 vertices partition $X$ and their degree-2 vertices are pairwise distinct. 

\item[Step 2:] Decompose the rest perfectly into triangles.
\end{description}

Note that $S$, the set of all odd-degree vertices of $G$, has even size (and also $|X|=|S|-2|M|$ is even).
Since the minimal degree of $G$ is at least~$(1-c)n$, a  simple greedy algorithm achieves Step~1 (and Theorem~\ref{thm:decomp} takes care of Step~2). 

The decomposition $\C D$ has exactly $|M|+|X|+\rho$ copies of $K_2$.
Also, $e(G)= {n\choose 2}-|Y_W|$. Thus
\begin{equation}\label{eq:UEmpty}
w\le \pi_3(G)- {n\choose 2}\le -|Y_W|+|M|+|X|+\rho.
\end{equation}

Using~\eqref{eq:yw} and that $|X|\not=1$ (since $|X|$ is even),
we obtain that
\begin{equation}\label{eq:uw}
w \le \left(3-|X|\right)\left(\frac{|X|}{2}+|M|\right)-2|M|+\rho.
\end{equation}

Moreover, $|X|\le 2$ as otherwise $2\le w\le \rho-2-3|M|$ contradicting $\rho\le 2$. Thus $X$ has either 0 or 2 elements.

Suppose that $X=\emptyset$. First, let $n$ be even. Then every vertex not in $S$ is incident to at least one non-edge of $G$, $|Y_W|\ge (n-2|M|)/2$, and by~\eqref{eq:UEmpty}, \[n/2\le w\le 2|M|+\rho-n/2.\]
If $2|M|\le n-2$, then all inequalities here become equalities and thus $|M|=\frac{n-2}2$, $|Y_W|=1$, $\rho=2$, $w=\frac n2$, and $n\equiv 0,2\pmod 6$. However, then the graph after Step 1 has exactly ${n\choose 2} -1- \frac{n-2}2-2$ edges, which is not divisible by~3, a contradiction. Thus $2|M|=n$, the copies of $K_2$ in the decomposition contains a perfect matching of $G$, and $\pi_3(G)\le \pi_3(K_n)$ with equality only if $G=K_n$, giving the desired. So suppose that $n$ is odd. Since every vertex of $S$ has to be incident to a missing edge of $G$, we have $|Y_W|\ge |S|/2=|M|$ and the bound in~\eqref{eq:UEmpty} becomes $w\le \rho$. It follows that we have equality throughout, $|Y_W|=|M|$, $w=\rho=2$, $n\equiv 1,3\pmod 6$, and ${n\choose 2}-|M|-\rho\equiv 0\pmod 3$; the last gives that $|M|\equiv2\pmod 3$. Thus $G$ is as required.

Finally, it remains to consider the case when $|X|=2$. This time,~\eqref{eq:uw} yields that \[2\le w\le \rho-|M|+1 \le 3.\]
Therefore,  $|M|\le 1$, and $n \equiv 1,3 \pmod 6$ as otherwise $w\ge 4$. 
If $|M|=1$, then we have equality everywhere, giving that $w=\rho=2$, $|S|=4$ and $|Y_W|=3$. However, then the graph after Step~1 has ${n\choose 2}-|Y_W|-|M|-|X|-\rho={n\choose 2}-8$ edges, which is not divisible by 3, a contradiction. Thus $M$ is empty, $\rho\in\{1,2\}$ and $S=X$. By~\eqref{eq:UEmpty}, $|Y_W|\le 2$ and hence $|Y_W|=1$.
In other words, $G=K_n^-$. However, then the graph after Step~1 has ${n\choose 2}-1-(2+\rho)$ edges, which is not divisible by~3. (Alternatively, Theorem~\ref{thm:decomp} gives that $\pi_3(K_n^-)-{n\choose 2}<2=w$.) This contradiction finishes Case~2 and the proof of the lemma.\end{proof}

 \begin{lemma}\label{lm:Kn}
There exist constants $\delta>0$ and $n_1\in \mathbb{N}$ such that the following holds.
Let $G$ be a graph  on $n \geq n_1$ vertices maximizing  $\pi_3(G)$ among all graphs 
that are $\delta n^2$-close to $K_n$.
Then $G\in\C E'_n$.
\end{lemma}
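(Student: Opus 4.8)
The plan is to reduce Lemma~\ref{lm:Kn} to the already-established Lemma~\ref{lm:KnDeg} by showing that a $\pi_3$-maximiser among graphs $\delta n^2$-close to $K_n$ cannot have any vertex of very small degree. Concretely, fix a graph $G$ realising the maximum of $\pi_3$ over all graphs $\delta n^2$-close to $K_n$; since $K_n$ itself (or any member of $\C E'_n$) is in this class and has $\pi_3$-value ${n\choose 2}+w(n)$, we know $\pi_3(G)\ge {n\choose 2}+w(n)$. If in addition every vertex of $G$ has degree at least $n/8$, then $G$ satisfies all the hypotheses of Lemma~\ref{lm:KnDeg} (with a possibly smaller $\delta$) and hence $G\in\C E'_n$, as desired. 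So the whole content of the proof is to rule out the existence of a low-degree vertex in the maximiser.

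The main step, then, is the following claim: if $G$ is $\delta n^2$-close to $K_n$ and some vertex $v$ has $d_G(v)<n/8$, then $G$ is not $\pi_3$-extremal in this class, because deleting $v$ and reattaching it as a dominating vertex (or, more cleanly, passing to the graph $G':=K_1 + (G-v)$ obtained by making $v$ adjacent to everything) strictly increases $\pi_3$ while staying $\delta n^2$-close to $K_n$. The "staying close" part is immediate: $G-v$ is $\delta n^2$-close to $K_{n-1}$, so $K_1+(G-v)$ is within $\delta n^2$ (in fact within $\delta n^2 + n$, which we absorb by shrinking $\delta$) of $K_n$. For the strict increase, I would argue that $\pi_3$ is "monotone" under adding edges at a low-degree vertex in the following sense: take an optimal decomposition $\C D$ of $G$; the edges at $v$ contribute, in the worst case, at most $2d_G(v) < n/4$ to the cost coming from $v$, whereas in $G'$ one can build a decomposition whose $v$-incident part costs close to $n$ but whose savings elsewhere (turning single edges of $G-v$ into triangles through $v$) more than compensate. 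More robustly, one can compare directly: since $G-v$ has minimum degree at least, say, $0.99n$ after the edit to $K_{n-1}$, Theorem~\ref{thm:decomp} and the bookkeeping of Table~\ref{ta:1} pin down $\pi_3(G-v)$ up to an additive constant, and adding a dominating vertex to an almost-complete graph on $n-1$ vertices raises $\pi_3$ by roughly $n$ (one pays about $2\cdot\frac{n}{2}$ for a near-perfect matching through $v$, modulo parity), which beats $2d_G(v)<n/4$. Hence a graph with a vertex of degree $<n/8$ is strictly suboptimal, so the maximiser has minimum degree $\ge n/8$.

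A cleaner way to organise the low-degree elimination, avoiding the need to recompute $\pi_3$ of the modified graph exactly, is an extremal/averaging argument: among all graphs $\delta n^2$-close to $K_n$ that attain the maximum of $\pi_3$, choose one, call it $G$, with the largest number of edges. If $G$ had a vertex $v$ of degree $<n/8$, let $u$ be a vertex of maximum degree (necessarily $\ge n-1-2\delta n$) and add to $G$ any non-edge at $v$; this cannot decrease $\pi_3$ because any decomposition of the larger graph restricts to a valid decomposition of $G$ of no greater cost — wait, that inequality goes the wrong way, so instead I use: adding an edge $e$ to a graph $H$ satisfies $\pi_3(H+e)\ge \pi_3(H)$ is false in general, but $\pi_3(H+e)\le \pi_3(H)+2$ always (decompose $e$ as a $K_2$), and more to the point the stability/Győri-type estimates used in Lemma~\ref{lm:KnDeg} already show the only way to have $\pi_3(G)={n\choose2}+w(n)$ with $G$ close to $K_n$ is via the structures in $\C E'_n$. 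So in fact the most economical route is simply: apply Lemma~\ref{lm:KnDeg} to $G$ if its minimum degree is at least $n/8$, and if not, directly bound $\pi_3(G)$ from above to contradict $\pi_3(G)\ge{n\choose2}+w(n)$.

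For that direct upper bound, suppose $v$ has $d:=d_G(v)<n/8$. Delete $v$: the graph $G-v$ is $\delta n^2$-close to $K_{n-1}$, so after editing at most $\delta n^2$ adjacencies it is complete, with minimum degree at least $(1-3\delta)n$ say, and triangle-divisibility can be arranged by removing $O(n)$ edges as $K_2$'s; by Theorem~\ref{thm:decomp} we get $\pi_3(G-v)\le e(G-v)+O(n)\le {n-1\choose 2}+O(n)$. Now reinsert $v$: its at most $d<n/8$ incident edges are added to the decomposition as single $K_2$'s at cost at most $2d<n/4$. Therefore
\begin{equation*}
\pi_3(G)\le \pi_3(G-v)+2d\le \binom{n-1}{2}+O(n)+\frac n4 = \binom n2 - (n-1) + O(n),
\end{equation*}
which is at most ${n\choose 2}+w(n)-1$ for large $n$ since $w(n)\le n/2+1$ and the $-(n-1)$ term dominates the $O(n)$ slack once the implied constants are fixed first — here I must be careful to choose $\delta\ll 1/(\text{that constant})$, so the $O(n)$ is genuinely smaller than, say, $n/2$, making the bound $\le{n\choose2}-n/4 <{n\choose2}+w(n)$. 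This contradicts $\pi_3(G)\ge{n\choose2}+w(n)$. Hence the maximiser has minimum degree $\ge n/8$, Lemma~\ref{lm:KnDeg} applies, and $G\in\C E'_n$. The one delicate point, and the place I'd be most careful, is the order of quantifiers: the $O(n)$ error in the decomposition of $G-v$ hides a constant independent of $\delta$ coming from Theorem~\ref{thm:decomp} and the parity/divisibility corrections, so $\delta$ (equivalently $n_1$) must be chosen after that constant is pinned down, and one must also re-derive $\pi_3(G-v)$ carefully enough (perhaps just as an inequality $\le{n-1\choose2}+Cn$ for an explicit $C$) rather than relying on the exact values in Table~\ref{ta:1}, which are only claimed for $K_{n-1}$ and its near neighbours in $\C E'_{n-1}$.
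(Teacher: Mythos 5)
Your overall strategy -- show the maximiser has no vertex of degree below $n/8$ and then invoke Lemma~\ref{lm:KnDeg} -- is exactly the paper's, but the step you use to rule out a low-degree vertex has a genuine gap. After deleting a vertex $v$ with $d_G(v)<n/8$, you claim that $G-v$, being $\delta n^2$-close to $K_{n-1}$, has minimum degree at least $(1-3\delta)n$ and is therefore amenable to Theorem~\ref{thm:decomp}, giving $\pi_3(G-v)\le \binom{n-1}{2}+O(n)$. That inference is false: closeness in edit distance only bounds the \emph{total} number of missing edges by $\delta n^2$, so $G-v$ may still contain up to $\Theta(\delta n)$ vertices of degree as small as you like (even isolated ones). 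Theorem~\ref{thm:decomp} then does not apply to $G-v$, and neither does Lemma~\ref{lm:KnDeg} (its hypothesis of minimum degree $n/8$ can fail for $G-v$ just as it did for $G$). Without one of these, there is no route to the crucial bound $\pi_3(G-v)\le\binom{n-1}{2}+Cn$ with small $C$; the trivial bound $2e(G-v)$ and the flag-algebra bound $(1/2+o(1))n^2$ are both far too weak at the $O(n)$ precision your arithmetic requires. (Your earlier, partially retracted variants -- reattaching $v$ as a dominating vertex, or an edge-maximal averaging argument -- run into the same problem or into the monotonicity failure you yourself noticed.)

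The repair is to iterate the deletion rather than stop after one step: from $G_n:=G$ repeatedly remove a vertex of degree less than $i/8$ from the current graph $G_i$ until none remains, obtaining $G_s$ with minimum degree at least $s/8$. The closeness hypothesis guarantees $s\ge n/2$ (otherwise $G$ would have on the order of $(n/2)(n/4)$ non-edges), so Lemma~\ref{lm:KnDeg} applies to $G_s$ and yields $\pi_3(G_s)\le s^2/2+1$. Re-adding all deleted edges as $K_2$'s costs at most $2(n-s)\cdot n/8$, so $\pi_3(G)\le s^2/2+1+(n-s)n/4$; this is convex in $s$ and is checked to be strictly below $\pi_3(K_n)$ at both endpoints $s=n/2$ and $s=n-1$, contradicting extremality. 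Note also that the quantity one compares against should be $\pi_3(K_n)\ge\binom{n}{2}$ (a member of the comparison class), which keeps the bookkeeping clean; your instinct about fixing the hidden constants before choosing $\delta$ and $n_1$ is correct and is needed in this version too.
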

\begin{proof}
Let $n_0$ and $\delta$ be the constants from Lemma~\ref{lm:KnDeg}. 
We claim that, for example, $n_1:=2n_0$ is enough for the conclusion of Lemma~\ref{lm:Kn} to hold. 
Indeed, take any extremal graph $G$ of order $n\ge n_1$. 
If $G$ satisfies the assumption on minimum degree of Lemma~\ref{lm:KnDeg}, then we are done.
Hence assume that the minimum degree of $G$ is less than $n/8$.
Let $G_n:=G$, and iteratively define a sequence of graphs $G_{n-1},G_{n-2},\ldots$ as follows. 
Given a graph $G_i$ of order $i$, if it has a vertex $x$ of degree less than $i/8$, let $G_{i-1}:=G_i-x$ be obtained from $G_i$ by removing the vertex $x$; otherwise stop. 
Note that the process does not reach $i<n/2$ for otherwise $G$ has roughly at least $(n/2)\times (n/4)$ non-edges, which is a contradiction to $G$ being $\delta n^2$-close to~$K_n$.

 Let $G_s$ with $|G_s|=s\ge n/2\ge n_0$ be the graph for which the above process terminates. 
 By Lemma~\ref{lm:KnDeg}, we have that $\pi_3(G_s)\le \frac{s^2}{2}+1$. By decomposing all edges in $E(G)\setminus E(G_s)$ as $K_2$'s, we obtain that
 $$
 \pi_3(G_n)\le \pi_3(G_s) + 2(n-s)\cdot\frac{n}8 \le \frac{s^2}{2}+1 +  (n-s)\cdot \frac n4.
 $$
 This is a convex function in $s$ so it is maximized on the boundary of $\frac{n}{2} \leq s \leq n-1$.
 If  $s = n/2$, we get   $\pi_3(G_n) \le n^2/4+2 < \binom{n}{2} \leq \pi_3(K_n)$.
 If $s = n-1$, we get
 \[
  \pi_3(G_n)\le \pi_3(G_s) + 2(n-s)\cdot\frac n8 \le \frac{(n-1)^2}{2}+1 +  \frac n4 \leq \binom{n}{2} - \frac n4+2 < \pi_3(K_n).
\]
In both cases, we get a contradiction to $G_n$ being extremal.
\end{proof}

\begin{proof}[Proof of Theorem~\ref{th:1}.] 

Choose sufficiently small constants in this order $1\gg \delta\gg 1/n_0>0$. 
In particular, $n_0$ is sufficiently large to satisfy Corollary~\ref{co:stab} for this $\delta$
as well as Lemmas~\ref{lm:keyT2n} and~\ref{lm:Kn}.
Let $G$ be an arbitrary graph of order~$n\ge n_0$ with $\pi_3(G)\ge \ell(n)$.
By Corollary~\ref{co:stab}, $G$ is $\delta n^2$-close to either $T_2(n)$ or $K_n$. 

If $G$ is close to  $T_2(n)$ then it must be $T_2(n)$ by Lemma~\ref{lm:keyT2n}.
If $G$ is close to  $K_n$ then it must be in $\C E_n'$ by Lemma~\ref{lm:Kn}.
By comparing the costs of optimal decompositions, we conclude that $G\in \C E_n$.
\end{proof}

\section{Extension to an arbitrary cost $\alpha$}\label{sec:alpha}

The goal of this section is to prove Theorem~\ref{thm:alpha}. Everywhere in this section, let~$n$ be sufficiently large.

First, note that the case $\alpha \geq 6$ is trivial. Indeed, the cost of a triangle is not better than a cost of three edges. Thus for every graph $G$ an optimal decomposition is to decompose all edges of $G$ as $K_2$'s. The unique graph maximizing the number of edges is $K_n$, so it is also the unique maximizer of $\pi_3^\alpha$ for every $\alpha \geq 6$.

Next, let us make some easy general observations which apply when $\alpha<6$. 
First, 
$$
\pi_3^\alpha(G)=\alpha \nu(G)+2(e(G)-3\nu(G))=2e(G)-(6-\alpha)\nu(G),
$$ 
 where
$\nu(G)$ denotes the maximum number of edge-disjoint triangles contained in $G$.
Also, if $\alpha_1 \le \alpha_2<6$, $\nu(G_1) \geq \nu(G_2)$ and   
$\pi_3^{\alpha_1}(G_1) > \pi_3^{\alpha_1}(G_2)$ for some graphs $G_1$ and $G_2$, 
then 
 \begin{equation}\label{eq:mon}
\pi_3^{\alpha_2}(G_1) - \pi_3^{\alpha_2}(G_2)=\pi_3^{\alpha_1}(G_1) - \pi_3^{\alpha_1}(G_2)+(\alpha_2-\alpha_1)(\nu(G_1)-\nu(G_2))>0.
\end{equation}
In particular, if $K_n$ is the maximizer of $\pi_3^{\alpha_1}$, it is also a maximizer for $\pi_3^{\alpha_2}$.

\subsection{The case $\alpha < 3$}

Next, we discuss the case $\alpha < 3$. Let $n$ be large and let $G$ be a $\pi_3^\alpha(n)$-extremal graphs.
Since $$
\pi_3^3(G)\ge \pi_3^\alpha(G)\ge \pi_3^\alpha(T_2(n))=\pi_3^3(T_2(n))=(1/2+o(1))n^2,
$$ 
 Corollary~\ref{co:stab} gives that $G$ is $o(n^2)$-close to $K_n$ or $T_2(n)$. Since $\alpha<3$, we have that $\pi_3^\alpha(T_2(n))\ge (1+\Omega(1))\pi_3^\alpha(K_n)$ and thus $G$ is close to $T_2(n)$. Now, Lemma~\ref{lm:keyT2n}
implies that $\pi_3^\alpha(G)\le \pi_3^3(G)\le \pi_3^3(T_2(n))=\pi_3^\alpha(T_2(n))$, with equality if and only if $G=T_2(n)$, giving the desired.

\subsection{The case $3 < \alpha < 4$}
This subsection proves  Theorem~\ref{thm:alpha} in case $3 < \alpha < 4$.

First, let us show that every $\pi_3^\alpha$-maximiser $G$ is in $K_n$ or $K_n^=$. 
Suppose for a contradiction that $G$ violates this.  In particular, we have
$\pi_3^{\alpha}(G) \geq \pi_3^{\alpha}(K_n)$.
By~\eqref{eq:mon}, we have that $\pi_3^3(G) \geq \pi_3^3(K_n)$.
For $n\to\infty$, it holds by Table~\ref{ta:1} that  $\pi_3^{\alpha}(K_n) \ge (1+\Omega(1))\,\pi_3^{\alpha}(T_2(n))$.
Hence $G$ needs to be close to $K_n$ and Lemma~\ref{lm:Kn} applies to $G$. 
In particular, this means that $n \equiv 1,3 \pmod 6$.
Lemma~\ref{lm:Kn} gives that all $\pi_3^3$-extremal graphs are obtained from $K_n$ by removing a matching of size congruent to $2$ modulo $3$. It follows from~\eqref{eq:mon} that, among these graphs, 
$\pi_3^{\alpha}$ is strictly maximized by $K_n^=$ since this graph has the largest~$\nu$.  

Theorem~\ref{thm:decomp} gives that
$3\nu(K_n^=)={n\choose 2}-6$.
Since $\pi_3^\alpha(G) \geq \pi_3^\alpha(K_n^=)$ and $\pi_3^3(G) < \pi_3^3(K_n^=)$, this implies by~\eqref{eq:mon} that $\nu(G) >  \nu(K_n^=)$.  Since also $\nu(G)<\nu(K_n)$ (otherwise $\pi_3^\alpha(G)< \pi_3^\alpha(K_n)$), we conclude that 
$3\nu(G)={n\choose 2}-3$, that is, exactly three pairs of vertices of $G$ are not included into some triangle from an optimal decomposition of~$G$. 
This implies that $G$ is a complete graph without one edge, or a path on three vertices, or a triangle.
Among these three candidates (that have the same $\nu$), $K^-$ has the largest size and thus maximizes $\pi_3^\alpha$. So $K^-$ is the only possible candidate for~$G$.
However, $\pi_3^\alpha(K_n^=) > \pi_3^\alpha(K_n^-)$ if $\alpha < 4$. This  contradiction finishes the proof in case $3 < \alpha < 4$.

Thus, every $\pi_3^\alpha$-maximiser is in $\{K_n,K_n^=\}$. It remains to compare these two graphs. Calculations based on Theorem~\ref{thm:decomp} show that
 $$
 \frac{\pi_3^\alpha(K_n^=)-\pi_3^\alpha(K_n)+4}{6-\alpha}=\nu(K_n)-\nu(K_n^=)=\left\{\begin{array}{ll} 0,& n\equiv 0,2,4,5\pmod 6,\\
 2,& n\equiv 1,3\pmod 6.
\end{array}\right.
$$
 Thus $\pi_3^\alpha(K_n)>\pi_3^\alpha(K_n^=)$ if $n\equiv 0,2,4,5\pmod 6$ and $\pi_3^\alpha(K_n^=)>\pi_3^\alpha(K_n)$ otherwise, as required.


\subsection{The case $4 \leq \alpha < 6$}

In this case we provide a direct proof, without using flag algebras or fractional decompositions. 
Let $n$ be large and let $G$ be any graph of order $n$ such that $\pi_3^\alpha(G) = \pi_3^\alpha(n)$.
Let $\C D$ be a decomposition of $G$ with minimum weight
consisting of $t$ triangles and $\ell$  edges.

If $G$ is a complete graph, then we are done. 
Hence we assume  there exists some pair of vertices $x,y \in G$ such that $xy \notin E(G)$. 
Let $G'$ be obtained from $G$ by adding the edge  $xy$. 
Let $\C D'$ be an optimal decomposition of $G'$ containing $t'$ triangles and $\ell'$ edges. 
Recall that finding an optimal decomposition is equivalent to maximizing a triangle packing, that is, $t'=\nu(G')$. Hence $t' \geq t$.

If  $xy$ is used as an edge in $\C D'$, then removing $xy$ from $\C D'$ gives a decomposition of $G$ with cost $\pi_3^\alpha(G')-2$, contradicting the maximality of $G$. 
Therefore $xy$ must appear in a triangle $xyz \in \C D'$. We now construct a decomposition $\C D^*$ of $G$ by removing $xyz$ from $\C D'$ and adding the edges $xz$ and $yz$. 
Since the total cost of $\C D^*$ is $\alpha(t'-1) + 2(\ell'+2)$ we have
 \[
 \pi_3^\alpha(G) \leq \mbox{cost}(\C D^*) = \alpha(t'-1) + 2 (\ell' +2) = \alpha t'  + 2 \ell' - \alpha  + 4  \leq \alpha t' + 2 \ell' = \pi_3^\alpha(G'),
 \] 

which contradicts the maximality of $\pi_3^\alpha(G)$ if at least one of the inequalities is strict. 
Hence $\alpha = 4$, 
$xy$ must be in a triangle in $\C D'$ and $\pi_3^\alpha(G') = \pi_3^\alpha(n)$. 

This means that it is possible to keep adding edges to $G$, which results in a sequence of graphs $G,G',\ldots,K_n$ where an optimal decomposition of each of these graphs has cost $\pi_3^\alpha(n)$, i.e.\ they all are $\pi_3^\alpha$-extremal graphs.
Note that we can add missing edges to $G$ in any order, always obtaining a sequence of extremal graphs.

This allows us to reverse the process and examine a sequence of edge removals from $K_n$.

%
%
%
%
%
%
%
%



Suppose that $G$ is obtained from $K_n$ by removing the edge $xy$, i.e.\ $G'$ is $K_n$.  
Notice that if $\ell' > 0$, i.e.\ the optimal  decomposition of $K_n$ contains an edge, then there exist an option for $\C D'$ that  contains the edge $xy$, which was already ruled out.
This means that $K_n$ is triangle-divisible, which is
the case if and only if $n \equiv 1,3 \pmod{6}$.

Now assume that $G$ is missing more than one edge. 
Hence $K_n^-$ must be also extremal.  By above, $n \equiv 1,3 \pmod{6}$, $K_n$ is triangle-divisible, and $\pi_3^4(n) = 4\nu(K_n)$, where $\nu(K_n)=\frac13{n\choose 2}$.

Suppose that $G$ is obtained from $K_n$ by removing two edges $uv$ and $xy$.
First, suppose that $u = x$. Let $\C D^\star$ be a decomposition of $G$ into triangles and one edge $vy$. This gives
\[
\pi_3^4(G) \leq  \mbox{cost}(\C D^\star) =  4(\nu(K_n)-1) + 2  < 4\nu(K_n) =  \pi_3^4(n),
\]
contradicting the maximality of $\pi_3^4(G) $.
Hence $xy$ and $uv$ form a matching.
Notice that $x$, $y$, $u$, and $v$ have odd degrees in $G$, so $\ell \geq 2$ for else we are unable to fix the parity of the vertices $x$, $y$, $u$, and $v$.
Now $\binom{n}{2} - \ell-2$ needs to be divisible by 3, so $\ell \geq 4$. 
There indeed exists a decomposition with $\ell = 4$  by taking edges $xu$, $xv$, $yu$, and $yv$ and rest as triangles.
This gives
\[
\pi_3^4(G) = 4(\nu(K_n) - 2) +  2 \cdot 4 = \pi_3^4(n).
\]
Therefore, $G$ is extremal.

Suppose that $G$ is obtained from $K_n$ by removing three edges $uv$, $xy$, and $zw$.
Since $G'$ must be $K_n$ without a matching, $uv$, $xy$, and $zw$ also form a matching.
Let $\C D^\star$ be a decomposition of $G$ into triangles and edges
$ux$, $yz$, and $vw$. This gives
\[
\pi_3^4(G) \leq  \mbox{cost}(\C D^\star) = 4(\nu(K_n)-2) + 6  < 4\nu(K_n) =  \pi_3^4(n),
\]
contradicting the maximality of $\pi_3^4(G)$.
This implies that $G$ cannot be obtained from $K_n$ by deleting three or more edges, thus finishing the proof of this case and of Theorem~\ref{thm:alpha}.
%
%
%
%
%

\section{Related results}\label{sec:related}

A related question of Erd\H os (see e.g.,~\cite{Erdos71cma}) asks for the largest $t=t(n,m)$ such that every graph with $n$ vertices and $t_2(n)+m$ edges has at least $t$ edge-disjoint triangles. Of course, $t\le m$. Gy{\H o}ri~\cite{Gyori88} (see~\cite{Gyori92} for a correction) showed, for large $n$, that $t\ge m-O(m^2/n^2)$ if $m=o(n^2)$, and $t=m$ if $n$ is odd and $m\le 2n-10$ or $n$ is even and $m\le 3n/2-5$. Moreover, the last two bounds on $m$ are sharp.

More recently, Gy{\H o}ri and Keszegh~\cite{GyoriKeszegh17c} proved that every $K_4$-free graph with $t_2(n)+m$ edges has $m$ edge-disjoint triangles.

Theorem~\ref{th:1} shows that the maximum of $\pi_3(G)$ is attained for $G=T_2(n)$ or $G=K_n$. However, if we restrict the set of graphs under consideration to graphs of a particular edge density, the decomposition is perhaps cheaper. Note that if the optimal decomposition of a graph $G$ contains $t$ triangles and $\ell$ edges, then $\pi_3(G)=
2e(G)-3t$. That is, we have that $\pi_3(G)=2e(G)-3\nu(G)$, where as before $\nu(G)$ denotes the maximum number of edge-disjoint triangles in~$G$.
Then Theorem \ref{thm:klmp} implies an inequality between the edge density of $G$ and its \emph{triangle packing density} which we denote by $\nu_d(G):=3\nu(G)/\binom{n}{2}$:

\begin{corollary}[of Theorem \ref{thm:klmp}]\label{cor:end} Let $G$ be a graph with $d\binom{n}{2}$ edges. Then
\[\nu_d(G)\ge 2d-1+o(1).\]
\end{corollary}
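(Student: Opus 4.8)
The plan is to simply translate Theorem~\ref{thm:klmp} through the identity $\pi_3(G)=2e(G)-3\nu(G)$, which is recorded just before the statement. First I would write down that identity for our graph $G$, so that
\[
3\nu(G)=2e(G)-\pi_3(G).
\]
Then I would invoke Theorem~\ref{thm:klmp}, which gives $\pi_3(G)\le(1/2+o(1))n^2$ as $n\to\infty$, to obtain the lower bound
\[
3\nu(G)\ge 2e(G)-\left(\tfrac12+o(1)\right)n^2.
\]

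Next I would divide through by $\binom{n}{2}$ to pass to densities. Using $\nu_d(G)=3\nu(G)/\binom{n}{2}$ and the hypothesis $e(G)=d\binom{n}{2}$, the first term on the right becomes exactly $2d$. For the second term, I would use $n^2/\binom{n}{2}=2n/(n-1)=2+o(1)$, so that $\left(\tfrac12+o(1)\right)n^2/\binom{n}{2}=1+o(1)$. Collecting these, $\nu_d(G)\ge 2d-1+o(1)$, which is the claim.

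The argument is essentially a one-line computation, so there is no real obstacle; the only thing to be mildly careful about is bookkeeping of the $o(1)$ error terms (the $o(1)$ in Theorem~\ref{thm:klmp} and the lower-order discrepancy between $n^2$ and $2\binom{n}{2}$ both contribute), but these combine cleanly into a single $o(1)$ as $n\to\infty$.
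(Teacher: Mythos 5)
Your proposal is correct and is exactly the argument the paper intends: it applies the identity $\pi_3(G)=2e(G)-3\nu(G)$ (stated immediately before the corollary), plugs in the bound $\pi_3(G)\le(1/2+o(1))n^2$ from Theorem~\ref{thm:klmp}, and normalises by $\binom{n}{2}$ using $n^2/\binom{n}{2}=2+o(1)$. The bookkeeping of the error terms is handled correctly, so there is nothing to add.
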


We also have that $\nu_d(G)\le d$, which is tight for all graphs which are the union of edge-disjoint triangles.

A question reminiscent of the seminal result of Razborov on the minimal triangle density in graphs \cite{Razborov08} (see also \cites{PikhurkoRazborov17,LiuPikhurkoStaden20}) would be to determine the exact lower bound on $\nu_d(G)$ in terms of $d$ (answering asymptotically the question of Erd\H os stated above).

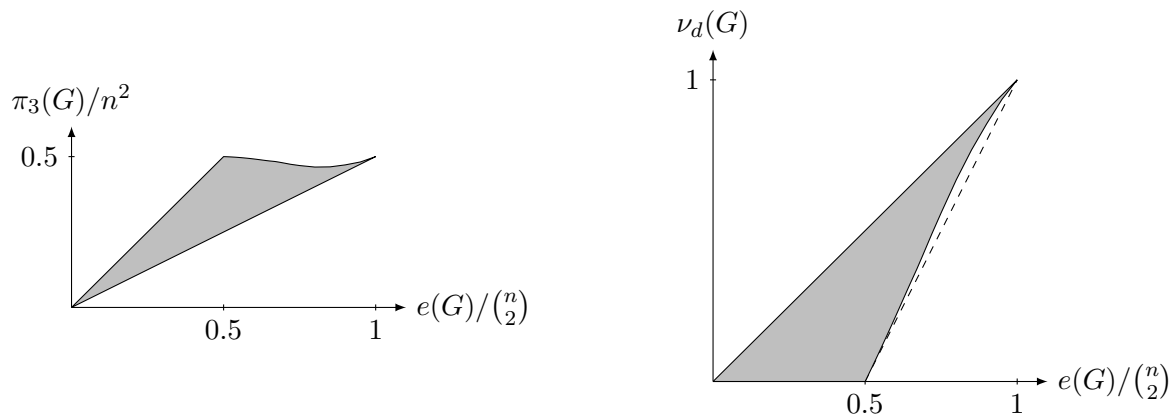
\begin{figure}[H]

\begin{subfigure}{0.45\textwidth}

\begin{tikzpicture}[scale=4]
\draw[-latex] (0,0) -- (1.1,0) node[right]{$e(G)/\binom{n}{2}$};
\draw[-latex] (0,0) -- (0,0.6) node[above]{$\pi_3(G)/n^2$};;
\draw (0.5,0) +(0,0.01)-- +(0,-0.01) node[below]{0.5};
\draw (1,0) +(0,0.01)-- +(0,-0.01) node[below]{1};
\draw (0,0.5) +(0.01,0)-- +(-0.01,0) node[left]{0.5};

 \draw[fill=gray!50](0,0) --
(0.500000000000000,0.5)--
(0.510000000000000,0.49963632)--
(0.520000000000000,0.49917183)--
(0.530000000000000,0.49860894)--
(0.540000000000000,0.4979502)--
(0.550000000000000,0.49719824)--
(0.560000000000000,0.49635583)--
(0.570000000000000,0.49542582)--
(0.580000000000000,0.4944112)--
(0.590000000000000,0.49331507)--
(0.600000000000000,0.49214063)--
(0.610000000000000,0.49089119)--
(0.620000000000000,0.4895702)--
(0.630000000000000,0.48820527)--
(0.640000000000000,0.48691476)--
(0.650000000000000,0.48571598)--
(0.660000000000000,0.48470355)--
(0.670000000000000,0.48369721)--
(0.680000000000000,0.4820356)--
(0.690000000000000,0.48025622)--
(0.700000000000000,0.47838868)--
(0.710000000000000,0.47648692)--
(0.720000000000000,0.47459943)--
(0.750000000000000,0.47035407)--
(0.800000000000000,0.46538961)--
(0.850000000000000,0.4660294)--
(0.900000000000000,0.47258249)--
(0.950000000000000,0.48229491)--
(1.0,0.5)--
(0,0);

\end{tikzpicture}
\end{subfigure}
\hfill
\begin{subfigure}{0.45\textwidth}
\begin{tikzpicture}[scale=4]
\draw[-latex] (0,0) -- (1.1,0) node[right]{$e(G)/\binom{n}{2}$};
\draw[-latex] (0,0) -- (0,1.1) node[above]{$\nu_d(G)$};
\draw[dashed] (0.5,0) -- (1,1);
\draw (0.5,0) +(0,0.01)-- +(0,-0.01) node[below]{0.5};
\draw (1,0) +(0,0.01)-- +(0,-0.01) node[below]{1};
\draw (0,1) +(0.01,0)-- +(-0.01,0) node[left]{1};

 \draw[fill=gray!50](0,0) --
(0.500000000000000,0)--
(0.510000000000000,0.0207274)--
(0.520000000000000,0.0416563)--
(0.530000000000000,0.0627821)--
(0.540000000000000,0.0840996)--
(0.550000000000000,0.105604)--
(0.560000000000000,0.127288)--
(0.570000000000000,0.149148)--
(0.580000000000000,0.171178)--
(0.590000000000000,0.19337)--
(0.600000000000000,0.215719)--
(0.610000000000000,0.238218)--
(0.620000000000000,0.26086)--
(0.630000000000000,0.283589)--
(0.640000000000000,0.30617)--
(0.650000000000000,0.328568)--
(0.660000000000000,0.350593)--
(0.670000000000000,0.372606)--
(0.680000000000000,0.3959929)--
(0.690000000000000,0.419488)--
(0.700000000000000,0.443223)--
(0.710000000000000,0.467026)--
(0.720000000000000,0.490801)--
(0.750000000000000,0.559292)--
(0.800000000000000,0.669221)--
(0.850000000000000,0.767941)--
(0.900000000000000,0.854835)--
(0.950000000000000,0.93541)--
(1.0,1.0)--
(0,0);

\end{tikzpicture}
\end{subfigure}
\caption{Asymptotic bounds on possible values of $\pi_3(G)$ and $\nu_d(G)$. The dashed line is simply $y=2x-1$ for a better display of the shape.}\label{fig:asinew}
\end{figure}

Some flag algebra computations yield numerical asymptotic lower bounds on $\nu_d(G)$ with different edge densities between 0.5 and 1. The result, depicted in Figure~\ref{fig:asinew}, suggests that the true asymptotic shape of the region $\{(d,\nu_d(G)): 0\le d\le 1, G \mbox{ graph}\}$ may indeed have a richer structure.

\section{Acknowledgement}
Work on this project was started at Rocky Mountain Great Plains Graduate Research Workshops in Combinatorics 2018. 
The work is partially supported by NSF-DMS grants \#1603823 and \#1604458
"Collaborative Research: Rocky Mountain Great Plains Graduate Research Workshops in Combinatorics" and by NSA grant \#H98230-18-1-0017, "The 2018 and 2019 Rocky Mountain – Great Plains Graduate Research Workshops in Combinatorics." 
We would like to thank Ryan R. Martin for fruitful discussions during the early stages of this project, and Ben Barber for suggesting the problem presented in Section \ref{sec:related}.

\bibliography{DecompositionsIntoK2K3}


\end{document}

\section*{Appendix}\label{sec:appendix}


Since it is not easy to find a copy of the paper~\cite{Gyori88}, we present a self-contained proof of Case 1 of Lemma~\ref{lm:keyT2n}. \comment{Y: The Gy\H ori paper is also published in Combinatorica \cite{Gyori91}, which is on SpringerLink. If we want to have this proof nonetheless (say, for completeness), then it could go to an appendix.}

\begin{lemma}\label{lem:closetobipartite}
There exists $\delta > 0$ and $n_0$ such that every graph $G$ on
 $n\ge n_0$ vertices which is $\delta n^2$-close to $T_2(n)$ in edit distance satisfies $\pi_3(G)\le \pi_3(T_2(n))$ with equality if and only if $G\cong T_2(n)$.
\end{lemma}
\begin{proof}
Pick a \emph{max-cut partition} $V(G)=V_1\cup V_2$, that is, one that maximizes the number of edges across the parts. Since
$e(G)\ge \pi_3(G)/2\ge t_2(n)$ and $G$ is $\delta n^2$-close to $T_2(n)$, we have that $e(G[V_1,V_2])\ge t_2(n)-\delta n^2$, where $t_2(n):=e(T_2(n))=\floor{n^2/4}$. In particular, we have that
\beq{eq:vi}
||V_1|-n/2|\le 2\sqrt\delta n \mbox{~ and ~}||V_2|-n/2|\le 2\sqrt\delta n.
\eeq
Let $B=E(G[V_1])\cup E(G[V_2])$ denote the set of all edges within $V_1$ and $V_2$, which we will call \emph{bad}, and let $M=\overline{E}[V_1,V_2]$ be the set of pairs of vertices in $V_1\times V_2$ which are non-edges in $G$ (we call those \emph{missing pairs}). Denote $b:=|B|$ and $m:=|M|$.

For $i\in\{1,2\}$, let $U_i:=\{x\in V_i\mid d_B(x)\ge cn\}$ and $U:=U_1\cup U_2$, that is, $U$ consists of vertices with "large" bad degree. Since $|G\bigtriangleup K[V_1,V_2]|\le \delta n^2$, we have $|U|\cdot cn \le 2\delta n^2$ and thus 
\beq{eq:u}
u:=|U|\le (2\delta/c) n.
\eeq

We now proceed with decomposing the graph $G$ into triangles and edges in three steps:
\begin{description}
\item[step 1.] While $G$ contains a triangle $xyz$ with $x\in U$, $xy\in B$ and $xz,yz\not \in B$, remove $xyz$ from $G$. Denote the resulting graph by $G'$. \comment{Y: Why is it important that $xy$ is the only bad edge in the triangle?}
\item[step 2.] While $G'$ contains a triangle $xyz$ with $xy\in B$, $x,y\not \in U$, remove $xyz$ from $G'$ in a ``uniform'' way.
\item[step 3.] Decompose the rest of the graph into copies of $K_2$. 
\end{description}

The following claim states that step~1 removes all but at most $\rho:=2\sqrt{\delta/c} n$ bad edges at any vertex of $U$.

\begin{claim}\label{cl:1} $|\Gamma_{G'}(x_1)\cap V_1|\le \rho$ for all $x_1\in U_1$ and $|\Gamma_{G'}(x_2)\cap V_2|\le \rho$ for all $x_2\in U_2$.\end{claim}

\begin{proof}[Proof of claim.] It suffices to show the claim holds for vertices $x\in U_1$, as the other case follows by symmetry. Let $X_i:=\Gamma_G(x)\cap V_i$ for $i\in\{1,2\}$ and $X_i':=\Gamma_{G'}(x)\cap V_i$. Note that by the max-cut property we have $|X_1|\leq |X_2|$. We now show that this inequality holds approximately for $X_1'$ and $X_2'$ as well. 

Indeed, the total number of $G'$-non-edges between $X_1$ and $X_2$ is at most $\delta n^2$ (non-edges of $G$) plus $nu$ (as we remove trivially at most $n/2$ triangles containing any vertex of $U$ with each triangle having at most two $[X_1,X_2]$-edges). The only possibility that a removed triangle $xyz$ has more $[x,X_2]$-edges than $[x,X_1]$-edges is when the edge $yz$ has at least one vertex in $U$. Thus, there are at most $u$ such triangles and we have
$|X_2'|\ge |X_1'|-2u$. 

Now, we observe that $G'$ cannot have any edges between $X_1'$ and $X_2'$ for otherwise we can remove further triangles from $G'$. Putting the above observations together, we obtain that
$$
\delta n^2+nu\ge |X_1'|\cdot |X_2'|\ge
|X_1'|\cdot (X_1'-2u).
$$ 
Combined with \eqref{eq:u}, this inequality gives $|X_1'|\le \rho$, as required.\end{proof}
 
Let us observe one useful property of $G'$.  Consider any  $x\in V_1\setminus U_1$. Since $x$ is incident with fewer than $cn$ bad edges and has $G$-degree at least $(\ell(n)-\ell(n-1))/2\ge n/2-1$, we have by~\eqref{eq:vi} that $d_M(x)\le v_2- (n-2)/2+cn\le 2cn$. The number of $[x,V_2]$-edges removed at step 1 is at most $2|U|$, since for each triangle removed in Step 1, we remove exactly one $[x,U]$-edge. 

Thus the number of
$G'$-non-neighbours of $x$ in $V_2$ is at most 
\beq{eq:x}
|V_2\setminus \Gamma_{G'}(x)|\le 2cn+2u.
\eeq
By a symmetric argument the same holds for any $x\in V_2\setminus U_2$.

Let us describe Step~2, in which we remove some further triangles from $G'$. 
We construct an auxiliary digraph $D$ on $V(G)$ in the following way. First, initialise $D$ as the empty graph. Then, repeat the following as long as possible: pick a bad edge $xy$ disjoint from $U$, without loss of generality $xy\subseteq V_1\setminus U$, among all current common neighbours $z\in V_2$ of $x$ and $y$ pick one with the smallest in-degree in $D$, remove the triangle $xyz$ from the current graph, and add the arcs $(x,z)$ and $(y,z)$ to $D$. 

\begin{claim}\label{cl:indegree} The in-degree in $D$ of every vertex in $V(G)\setminus U$
is at most $\beta:=2\delta n$ during any point of Step~2. Also, Step~2 removes all bad edges disjoint from $U$.\end{claim}

\begin{proof}[Proof of claim.] Suppose on the contrary that the first part of the claim is false and consider the first time that this happens, without loss of generality when a vertex $z\in V_1$ gets in-degree in $D$ at least $\beta$ because of the removed triangle on $xyz$, where  $xy\subseteq V_1\setminus U$. The out-degree of $x$ in $D$ is at most $d_B(x)\le cn$. The in-degree in $D$ of $x$ is less than $\beta$. The same applies to $y$. Thus by~\eqref{eq:x}, when we remove the triangle on $xyz$, the vertices $x,y$ have between them
at most $2(2cn+2u+cn+\beta)$ non-neighbours in $V_1$. In other words, the number of common neighbours of $x,y$ in $V_1$ is at the current moment at least
\beq{eq:common}
|V_1| - 2(2cn+2u+cn+\beta)\ge (1/2-7c)n,
\eeq 
since by \eqref{eq:vi} we have $|V_1-n/2|\le 2\sqrt{\delta}n$. Each of these common neighbours has in-degree in $D$ at least $\beta-2$ because the in-degree of the chosen common neighbour $z$ is at least this amount.
 Thus the number of arcs in $D$ is at least $2(\beta-2)(1/2-7c)n$. On the other hand, it is at most $2b\le \delta n^2$, which contradicts our choice of~$\beta$ and proves the first part.
 
The second part now easily follows. By~\eqref{eq:x} and (the proof of) the first part, the number of common neighbours in $V_{3-i}$ of any  bad edge $xy \subseteq V_i\setminus U$ at any moment of Step~2 is lower bounded by the expression in~\eqref{eq:common} and this bound is strictly positive. Thus there is always at least one choice to remove a triangle containing $xy$.\end{proof} 

Now it remains to estimate the value of the obtained decomposition. For $i\in\{1,2,3\}$, let $b_i$ be the number of bad edges removed at Step $i$.
By the second part of Claim \ref{cl:indegree}, $b_3$ counts only bad edges intersecting $U$. By Claim~\ref{cl:1}, for every vertex of $U$, the ratio of the number of incident $b_3$-edges to that for $b_1$-edges is at most
$\rho/(cn-\rho)$. Since we may double count some edges, we conclude that
$b_3\le \frac{2\rho}{c n -\rho}\, b_1$. Because in Steps~1 and~2 we were removing two cross-edges per one removed bad edge, the number of edges between $V_1$ and $V_2$ that were removed in Step~3 is $|V_1||V_2|-m-2b_1-2b_2\le t_2(n)-2b_1-2b_2$. Putting these inequalities together, we conclude that
 $$
 \pi_3(G)\le \underbrace{3b_1+3b_2}_{\mbox{\small Step 1\&2 }\triangle\mbox{\small s}}+\underbrace{2\frac{2\rho}{c n -\rho} b_1}_{\mbox{\small Step 3 bad edges}} + ~~~ \underbrace{2(t_2(n)-2b_1-2b_2)}_{\mbox{\small Step 3 cross-edges}}\le 2t_2(n).
 $$
 If $\pi_3(G)=2t_2(n)$, then $b_1=b_2=b_3=0$, $m=0$, and $v_1v_2=t_2(n)$, which clearly implies that $G\cong T_2(n)$, as required. \end{proof}

 \end{document}
 
 OLD KEY LEMMA
 
 \newpage
 \begin{lemma}\label{lm:key} There exists $n_1\in \mathbb{N}$ such that for any graph $G$ of order $n\ge n_1$ and minimum degree at least $(\ell(n)-\ell(n-1))/2$, we have
$\pi_3(G)\le \ell(n)$ with equality if and only if $G\in\C E_n$.\end{lemma}

\begin{proof}[Proof of Lemma \ref{lm:key}.] Choose small constants in this order $1\gg c\gg \delta\gg 1/n_1>0$. In particular, $n_1$ is sufficiently large to satisfy Corollary~\ref{co:stab} for this $\delta$. Let $G$ be an arbitrary graph of order~$n\ge n_1$ with $\pi_3(G)\ge \ell(n)$. By Corollary~\ref{co:stab}, $G$ is $\delta n^2$-close to either $T_2(n)$ or $K_n$. 

\textbf{Case 1:}
Suppose first that $G$ is $\delta n^2$-close to $T_2(n)$. We will show that $\pi_3(G)\le \pi_3(T_2(n))$ with equality if and only if $G\cong T_2(n)$, which is enough for proving the lemma in this case. In fact, this claim can be directly derived from a result of Gy\H ori~\cite{Gyori88} that a graph with $n$ vertices and $t_2(n)+k$ edges, where $t_2(n):= |E(T_2(n))|$, $n\to\infty$ and $k=o(n^2)$,
has at least $k-O(k^2/n^2)$ edge-disjoint triangles. More specifically, for each $\e>0$ there exists $\delta>0$ such that for large $n$ every $n$-vertex graph with $t_2(n)+k$ edges where $k\leq \delta n^2$ has at least $k-\e k^2/n^2$ edge-disjoint triangles. Since $G$ is $\delta n^2$-close to $T_2(n)$, then it must have at most $t_2(n)+\delta n^2$ edges. From this we have that $\pi_3(G)\leq 2(t_2(n))+k)-3(k-\e k^2/n^2)=2t_2(n)-k(1-3\e k/n^2)\leq 2t_2(n)$ for $\delta\ll \e\ll 1$. Equality is achieved only if $k=0$, that is, if $G\cong T_2(n)$. For completeness, we include a self-contained proof of this result in the \nameref{sec:appendix}.

\textbf{Case 2:} Now, suppose that $G$ is $\delta n^2$-close to $K_n$. Denote by $M=E(\O G)$ the set of pairs of vertices which are non-edges in $G$. Define 
$$
 U:=\{x\in V(G)\mid d_M(x)\ge cn\}.
 $$
 and let $W:=V(G)\setminus U$. By counting edges from $U$ to $V(G)$ we have  $|U|\cdot cn \le 2\delta n^2$ and hence $u:=|U|\le (2\delta/c)n$.
 
We decompose the edges of $G$ into triangles and edges in three Steps:
\begin{description}
\item[Step 1:] Consider all vertices $x\in U$ one by one and remove a maximum family of edge-disjoint triangles, each containing $x$ and two vertices from $W$. Denote the resulting graph induced on $W$ by $G'$.

\item[Step 2:] Remove a maximum collection of edge-disjoint triangles from $G'$.
\item[Step 3:] Decompose the rest of the graph into copies of $K_2$.
\end{description} 

First, we show that, up to parity, the triangles removed during Step 1 cover all $[U,W]$-edges in $G$.

\begin{claim} $|\Gamma_{G'}(u)\cap W|\leq 1$ for all $u\in U$.\end{claim}

\begin{proof}[Proof of claim.] 
Consider the moment when a vertex $x\in U$ is processed during Step 1.
Let $G''\subseteq G$ be the graph before the processing of $x$ starts.
By the minimum degree assumption, we have that $X:=\Gamma_G(x)\cap W$ contains at least $(n-2)/2-u+1\ge n/2-u$ vertices. For every vertex $y\in X$, the number of removed $[y,X]$-edges is at most $u$. Also, since $y\not\in U$, we have $d_M(y)<cn$. Thus, we have that
$$
\delta(G''[X])\ge |X|-1-u-cn >|X|/2.
$$
Then, by Dirac's theorem, $G''[X]$ has a Hamilton cycle. Pick alternating edges on this cycle, which combined with $x$ give the desired family of triangles.\end{proof}

Note that $G'$ has minimum degree at least $|W|-1-cn-u>(1-2c)|W|$. Let $S$ be the set of vertices of $G'$ of odd degree, denote $s:=|S|$ (note that $s$ is necessarily even), and let $p$ be the number of edges of $G'[W]$ that remain after Step 2. 

\begin{claim}\label{cl:p} We have $p\le \max\{s/2+r,6cn\}$, where $r\in\{0,1,2\}$ is the residue of $e(G')-s/2$ modulo $3$. In particular, $p\le (n-u)/2+2$.\end{claim}

\begin{proof}[Proof of claim.] First, suppose that $s>5cn$. Note that the minimum degree of $G'[S]$ is at least $s-2cn>s/2$. We aim to remove a matching from $G'[S]$ consisting of roughly $s/2$ edges, plus maybe some small graph (depending on the value of $r$), in order to obtain a triangle-divisible graph $G''$ (that is, such that $d(v)$ is even for all vertices $v\in G''$ and $3|e(G'')$), which is perfectly decomposable into triangles by Theorem~\ref{thm:decomp}.
Notice that $p$ counts the edges that we removed to make $G''$ triangle-divisible, so this gives $p\le s/2$ plus a small constant.\\
If $r=0$, by Dirac's theorem $G'[S]$ has a Hamilton cycle and thus a perfect matching $N$. The graph $G'':=G'-N$ has all degrees even and 3 divides $e(G'')$, which gives $p\le s/2$.\\
If $r=1$, let $v$ be any vertex in $S$, and let $\{v_1,v_2,v_3\}$ be three of its neighbours in $G'$. Again by Dirac's theorem we can find a perfect matching $N$ in $G'[S\backslash \{v,v_1,v_2,v_3\}]$, giving $p\le s/2+1$.\\ 
If $r=2$, let $xy$ be an edge in $W\backslash S$, and let $N_x$ and $N_y$ be the neighbourhoods of its endpoints in $S$ \comment{Y: This only works if $W\backslash S$ has an edge...}. Then $|N_x|,|N_y|\geq (1-2c)|W|-(|W|-s)>3cn$. In particular, we may choose a vertex $x'\in S$ which is a neighbour of $x$ and a vertex $y'\in S$ which is a neighbour of $y$. As before, we take $N$ to be a perfect matching on $S\backslash\{x',y'\}$, which gives $p\le s/2+2$. \comment{Instead do $K_{1,5}$ and a perfect matching. Still gives $p\le s/2+2$.}

Now suppose that $s\le 5cn$. Given any two vertices $x,y\in S$, their common neighbourhood in $W\backslash S$ has size at least $2((1-2c)|W|-s)-|W|\ge s/2$, so we can greedily find a collection of $s/2$ cherries (copies of $K_{1,2}$) whose degree-1 vertices partition $S$ and whose degree-2 vertices lie in $W\backslash S$. Extending up to two of these cherries to paths of length 3 and removing them leaves a triangle-divisible graph, giving $p\le s+2\le 6cn$.\end{proof}

It now remains to show that this decomposition has total cost at most $\ell(n)$.

Let $t_1$ and $t_2$ be the number of triangles removed respectively in Steps 1 and 2. 
By counting pairs of vertices inside $W$, we conclude that $t_1+3t_2+p\le {n-u\choose 2}$. Indeed, each triangle contributing to $t_1$ corresponds to a pair of vertices in $W$, and each triangle from Step 2 is contained in $W$ and hence corresponds to three pairs of vertices in $W$.

Since each vertex of $U$ has degree at most $(1-c)n$ in $G$, we also have that $t_1\le u(1-c)n/2$. 
Thus we obtain
 \begin{eqnarray*}
 \pi_3(G)&\le& 3t_1+3t_2+2{u\choose 2}+2p+2u\\
 &\le& 3t_1+\left({n-u\choose 2}-p-t_1\right)+2{u\choose 2}+2p+2u\\
  &\le& u(1-c)n +{n-u\choose 2}+2{u\choose 2}+p+2u\\
  &=& {n\choose 2} +\frac{3u^2}{2}+\frac{3u}{2}-cnu +p.
 \end{eqnarray*}

We now compare this bound with the conjectured maxima presented in Table \ref{ta:1}.

First, suppose that $n$ is even. In this case the larger value is achieved by $K_n$ and it is at least $\pi_3(K_n)\ge n^2/2={n\choose 2}+\frac n2$. Since $u\le (2\delta/c)n$, we have that $3u^2/2+\frac{3u}{2}\le cnu/2$ and so by Claim~\ref{cl:p}
$$\pi_3(G)-\pi_3(K_n)\le
-cnu/2+(n-u)/2+2-(n+u)/2.
$$ 
\comment{BL: Why $(n+u)/2?$	}
\[
\pi_3(G)-\pi_3(K_n)\le
-cnu/2+(n-u)/2+2-n/2 =  
-cnu/2-u/2+2 
\]

This is non-negative only if $u=0$, and since $G$ is extremal, all inequalities we used in upper-bounding $\pi_3(G)$ are tight. In particular, we get that $t_1=u(1-c)n/2=0$, and hence $e(G)=3t_2+p=\binom{n-u}{2}=\binom{n}{2}$. This immediately gives that $G\cong K_n$.

%
Now, suppose that $n$ is odd. In this case we have $\pi_3(T_2(n))-n/2\ge  \pi_3(K_n)-O(1)={n\choose 2}-O(1)$. Similarly to the previous case, $\pi_3(G)-\pi_3(T_2(n))\le -cnu/2+O(1)$, which again is non-negative only if $u=0$.
Hence
\begin{align*}
\pi_3(G) &\leq \frac{(n-s)(n-1) + s(n-2) - 2\cdot \max\{s/2+2,6cn\}}{2} +  2\cdot \max\{s/2+2,6cn\} \\
&= \binom{n}{2} - \frac{s}{2} + \max\{s/2+2,6cn\} \leq \binom{n}{2} + 2 + 6cn <  \binom{n}{2}  + n/4 < \pi_3(T_2(n)),
\end{align*}
a contradiction to the extremality of $G$. This finishes the proof of Lemma~\ref{lm:key}.
\end{proof}

\end{document}